\newcommand{\id}{\text{id}}
\DeclareMathOperator{\len}{lh}
\DeclareMathOperator{\dom}{dom}
\DeclareMathOperator{\cf}{cf}
\DeclareMathOperator{\CH}{CH}
\DeclareMathOperator{\range}{range}
\DeclareMathOperator{\ZFC}{ZFC}
\DeclareMathOperator{\MA}{MA}
\DeclareMathOperator{\AP}{AP}
\def\MPB{{\mathbb{P}}}
\def\MQB{{\mathbb{Q}}}
\def\MMB{{\mathbb{M}}}
\def\MNB{{\mathbb{N}}}
\def\MKB{{\mathbb{K}}}
\newcommand{\cH}{{\mathscr H}}
\newtheorem{theorem}{Theorem}[section]
\newtheorem{lemma}[theorem]{Lemma}
\newtheorem{proposition}[theorem]{Proposition}
\newtheorem{corollary}[theorem]{Corollary}
\newtheorem{notation}[theorem]{Notation}
\newtheorem{convention}[theorem]{Convention}
\theoremstyle{definition}
\newtheorem{definition}[theorem]{Definition}
\theoremstyle{remark}
\newtheorem{remark}[theorem]{Remark}
\newtheorem{claim}[theorem]{Claim}
\title[The Keisler--Shelah isomorphism theorem and the continuum hypothesis II]{The Keisler--Shelah isomorphism theorem and the continuum hypothesis II}
\author[M. Golshani and S. Shelah]{Mohammad  Golshani and Saharon Shelah}
\thanks{%
 The first author's research has been supported by a grant from
  IPM (No. 1400030417). The
	second author's research  partially supported by
ISF 1838/19: The Israel Science Foundation (ISF) (2019-2023) and
Rutgers 2018 DMS 1833363: NSF DMS Rutgers visitor program (PI S. Thomas) (2018-2022).
This is publication 1223 of the second author. The first author thanks  Mostafa Mirabi and Rahman Mohammadpour for carefully reading the paper and providing useful suggestions. The authors thank the referee of the paper for many useful comments and suggestions.}
\address{
School of Mathematics, Institute for Research in Fundamental Sciences (IPM), P.O. Box:
19395-5746, Tehran-Iran. }
\email{golshani.m@gmail.com}
\address{
Einstein Institute of Mathematics, The Hebrew University of Jerusalem, Jerusalem,
	91904, Israel, and Department of Mathematics, Rutgers University, New Brunswick, NJ
	08854, USA.}
\email{shelah@math.huji.ac.il}
\subjclass[2010]{03C20, 03E35}
\keywords{}
\begin{document}
\begin{abstract}
We continue the investigation started in \cite{gol-sh} about the relation between the Keilser--Shelah isomorphism theorem and the continuum hypothesis.
In particular, we show it is consistent that the continuum hypothesis fails and for any given  sequence $\mathbf m=\langle (\MMB^{1}_n, \MMB^{2}_n): n < \omega              \rangle$ of models of size at most $\aleph_1$ in a countable language, if the sequence satisfies a mild extra property, then
for every non-principal ultrafilter $\mathcal D$ on $\omega$, if the ultraproducts  $\prod\limits_{\mathcal D} \MMB^{1}_n$ and  $\prod\limits_{\mathcal D} \MMB^{2}_n$
are elementarily equivalent, then they  are isomorphic.

\end{abstract}
 \maketitle


\section{Introduction}
Ultraproducts arise naturally in  model theory and many other areas of mathematics, see \cite[Chapter VI]{sh:c}. An ultraproduct is a way to connect the notions of elementary equivalence and isomorphism. By a result
of Keisler \cite{keislr}, the continuum hypothesis,  $\CH$, implies that in a countable language $\mathcal L$, two $\mathcal L$-models  $\bold M, \bold N$ of size $\leq 2^{\aleph_0}$,
are elementarily equivalent if and only if they have isomorphic ultrapowers with respect to an ultrafilter on $\omega.$  Recently the authors of this paper \cite{gol-sh} have shown that Keisler's theorem is indeed equivalent to the $\CH$, by showing that there are two elementary equivalent dense linear orders $M$ and  $N$ of size $\leq \aleph_2$ which do not have isomorphic ultrapowers with respect to any ultrafilter on $\omega.$
 Much earlier but after Keisler, Shelah \cite{sh:iso} removed  the $\CH$ from Keisler's theorem by weakening the conclusion  and showed that if $\mathcal L$ is a countable language  and $\bold M, \bold N$ are  countable $\mathcal L$-models,
then $\bold M \equiv \bold N$ if and only if
 they have isomorphic ultrapowers with respect to an ultrafilter on
  $2^\omega$.
 Shelah \cite{sh326} has shown that
the $\CH$ is an essential assumption for Keisler's theorem, even for countable models, by constructing a model of $\ZFC$ in which $2^{\aleph_0}=\aleph_2$
and  there are countable graphs $\bold \Delta \equiv \bold \Gamma$  such that for no ultrafilter $\mathcal U$
on $\omega,$  $\bold \Delta^{\omega}/ \mathcal U \simeq \bold \Gamma^\omega / \mathcal U.$
See also \cite{goto}, where some further connections  between several variants of Keilser's theorem and cardinal invariants are found.

In this paper, we continue the investigations started in \cite{gol-sh}.
 In Section \ref{etkit}, we consider some further extensions of Keisler's isomorphism theorem.
 To this end, we define the notion of an ultraproduct problem
 $\bold{m}=\langle (\MMB^{\bold m, 1}_n, \MMB^{\bold m, 2}_n, \tau_{\bold m, n}): n < \omega              \rangle$ and  show the consistency of the failure of the $\CH$ with the assertion that for any non-principal ultrafilter $\mathcal D$ on $\omega,$ if
 the ultraproducts
 $\prod\limits_{\mathcal D} \MMB^{\bold m, 1}_n$ and $\prod\limits_{\mathcal D} \MMB^{\bold m, 2}_n$ are elementarily equivalent, then they are indeed isomorphic.
In  Section \ref{keisler2} we show that the above conclusion fails if $\mathfrak{b}> \aleph_1$, where $\mathfrak{b}$ denotes the bounding number.
\section{Extending the Keisler isomorphism theorem}
\label{etkit}
Recall from \cite{gol-sh} that there exists a dense linear order $N$ of size $\aleph_2$ which is elementary equivalent to $M=(\mathbb{Q}, <)$,
but for no ultrafilter $\mathcal U$ on $\omega, {}^{\omega}{M}/\mathcal U \simeq {}^{\omega}{N}/\mathcal U$. We also proved several consistent
extensions of the Keisler's theorem
for models of size at most $\aleph_1$ in the absence of the continuum hypothesis.
 In this section, we continue the work started in \cite{gol-sh} and give a further extension of Keisler's isomorphism theorem.
To this end, we start by making some definitions.
\begin{definition}
\label{upp}
A {\em pseudo ultraproduct problem} is a sequence
\[
\bold{m}=\langle (\MMB^{\bold m, 1}_n, \MMB^{\bold m, 2}_n, \tau_{\bold m, n}): n < \omega              \rangle
\]
where
\begin{enumerate}
\item $\langle \tau_{\bold m, n}: n<\omega  \rangle$ is a $\subseteq$-increasing sequence of finite vocabularies, with $\tau_{\bold m, 0}=\emptyset$. Set $\tau_{\bold m}=\bigcup\limits_n \tau_{\bold m, n},$\footnote{Thus we allow that $\tau_{\bold m}$ to be finite.}

\item each $\MMB^{\bold m, \ell}_n$ is a $\tau_{\bold m}$-model,
\item $\kappa(\bold{m}) \leq \aleph_1,$ where $\kappa(\bold{m})=\sup\{ ||\MMB^{\bold m, \ell}_n||: \ell=1, 2,$ and $ n<\omega        \}$
and $ ||\MMB^{\bold m, \ell}_n||$ denotes the size of the universe of the model $\MMB^{\bold m, \ell}_n$.

\end{enumerate}
\end{definition}
\begin{definition}
\label{ehfrgame0} Suppose $\bold m$ is a pseudo ultraproduct problem and $k \leq n<\omega.$
\begin{enumerate}
\item The Ehrenfeucht–Fra\"{i}ss\'{e} game $\Game^n_k(\bold m)=\Game_{\tau_{\bold m, k}, k}(\MMB^{\bold m, 1}_n, \MMB^{ \bold m, 2}_n)$
is defined as a game between two players protagonist and
antagonist  where
\begin{enumerate}
\item[(a)] it has $2(k+1)$ moves,

\item[(b)] protagonist plays at even stages and  antagonist plays at odd stages,

\item[(c)] in the $(2l+1)$-th move, the antagonist chooses  $A_l \subseteq \MMB^{\bold m, 1}_n, B_l \subseteq \MMB^{\bold m, 2}_n$
such that $|A_l|+|B_l| \leq k$,

\item[(d)] in the $(2l+2)$-th move, the protagonist chooses $f_l$, a partial one-to-one function  from $\MMB^{\bold m, 1}_n \restriction \tau_{ \bold m, k}$ into $\MMB^{\bold m, 2}_n$, which preserves $\phi$ and $\neg\phi,$ for $\phi$ a strictly atomic formula (i.e., $\phi$ is of the form $x=y$ or $P(x_0, \cdots, x_{m-1})$ or $F(x_0, \cdots, x_{m-1})=y,$ where $P$ is a predicate symbol and $F$ is a function symbol or an individual constant) from $\tau_{ \bold m, k}$,

\item[(e)] the protagonist has to  satisfy $A_l \subseteq \dom(f_l)$, $B_l  \subseteq \range(f_l)$
and $f_l \supseteq f_{l-1}$,
\end{enumerate}

\item We say that the protagonist looses the game $\Game^n_k(\bold m)$, when there is no legal move for him to do.
\end{enumerate}
\end{definition}
The following easy lemma will be useful later.
\begin{lemma}
\label{ss1}
Suppose $\bold m$ is a pseudo ultraproduct problem  and $k \leq n<\omega.$ Let
$f$ be the last move of protagonist in the game $\Game^n_k(\bold m)$. If $\phi(\nu_0, \cdots, \nu_{l-1})$
is a $\tau_{ \bold m, k}$-formula and $x_0, \cdots, x_{l-1} \in \dom(f)$,
then
\[
\MMB^{\bold m, 1}_n \models \phi_{\dom(f)}(x_0, \cdots, x_{l-1}) \Leftrightarrow \MMB^{\bold m, 2}_n \models \phi_{\range(f)}(f(x_0), \cdots, f(x_{l-1})),
\]
where for any set $D$, $\phi_D$ is obtained from $\phi$ by replacing all quantifiers $\exists x$ and $\forall x$ by the restricted quantifiers $\exists x \in D$
and $\forall x \in D$ respectively.
\end{lemma}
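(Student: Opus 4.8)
The plan is an entirely routine induction on the complexity of the $\tau_{\bold m,k}$-formula $\phi$; the real content of the lemma is just the observation that, because the quantifiers of $\phi$ have been relativised to $\dom(f)$ (respectively $\range(f)$), a \emph{single} partial map $f$ preserving the strictly atomic formulas already suffices --- in particular the number $k+1$ of rounds of $\Game^n_k(\bold m)$ plays no role here. Write $M_\ell=\MMB^{\bold m,\ell}_n$ for $\ell=1,2$. Being a move of the protagonist, $f$ is a one-to-one partial function from $M_1$ into $M_2$ such that $f$ and $f^{-1}$ preserve every strictly atomic $\tau_{\bold m,k}$-formula and its negation; in particular $f$ restricts to a bijection of $\dom(f)$ onto $\range(f)$, and $f^{-1}$ has the symmetric preservation property with the two models interchanged.

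For $\phi$ atomic (hence quantifier-free, so that $\phi_{\dom(f)}$ is literally $\phi$), the case $\phi$ strictly atomic is exactly clause~(d) of \Cref{ehfrgame0}, applied to $\phi$ and to $\neg\phi$; the case of a general atomic formula $t_1=t_2$ or $P(t_0,\dots,t_{m-1})$ reduces to the strictly atomic one by introducing, for each compound subterm, an auxiliary variable and transporting its value across $f$ --- this is the only step requiring a moment's care, and it is where one uses that the finitely many function symbols of the finite vocabulary $\tau_{\bold m,k}$ are respected by $f$. The Boolean cases $\neg\psi$, $\psi_0\wedge\psi_1$, $\psi_0\vee\psi_1$ follow at once from the induction hypothesis, since relativisation commutes with $\neg,\wedge,\vee$.

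The quantifier step is the only one worth displaying. Let $\phi(\bar\nu)=\exists\nu\,\psi(\nu,\bar\nu)$, so that $\phi_{\dom(f)}(\bar x)$ is $(\exists\nu\in\dom(f))\,\psi_{\dom(f)}(\nu,\bar x)$. If $M_1\models\phi_{\dom(f)}(\bar x)$, choose $a\in\dom(f)$ with $M_1\models\psi_{\dom(f)}(a,\bar x)$; applying the induction hypothesis to $\psi$ (legitimate since $a,\bar x\in\dom(f)$) gives $M_2\models\psi_{\range(f)}(f(a),f(\bar x))$, and as $f(a)\in\range(f)$ we conclude $M_2\models\phi_{\range(f)}(f(\bar x))$. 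The converse implication is the same argument with $f^{-1}$ in place of $f$, using $\dom(f^{-1})=\range(f)$ and $\range(f^{-1})=\dom(f)$; the universal quantifier is handled symmetrically, or by rewriting $\forall\nu$ as $\neg\exists\nu\neg$ and invoking the cases already done.

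I do not expect any real obstacle: the lemma is as easy as advertised. The single point that is not completely mechanical is the reduction of arbitrary atomic formulas to strictly atomic ones in the base case; everything else is the standard back-and-forth bookkeeping, made trivial here by the fact that relativising all quantifiers to $\dom(f)$ and $\range(f)$ turns the assertion ``$f$ is $\tau_{\bold m,k}$-elementary between the relevant restricted structures'' into a statement needing only that $f$ is a partial isomorphism together with the obvious back-and-forth supplied by $f\colon\dom(f)\to\range(f)$ being a bijection.
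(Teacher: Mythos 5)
Your proof is correct and follows exactly the paper's argument: induction on complexity, with strictly atomic formulas as the base case (clause (d) of the game), trivial Boolean steps, and the existential step handled by pushing a witness through the bijection $f\colon\dom(f)\to\range(f)$ in one direction and through $f^{-1}$ in the other. The one place where you go beyond the paper is the claimed reduction of nested atomic formulas ($P(t_0,\dots,t_{m-1})$ with compound terms) to strictly atomic ones by ``transporting the value of each subterm across $f$''; this does not actually work, because the value $F^{\MMB^{\bold m,1}_n}(x)$ of a compound subterm need not lie in $\dom(f)$, and then $f$ carries no information about it (one can easily build a counterexample with a unary function and a unary predicate). The paper sidesteps this by taking strictly atomic formulas as the only base case, i.e.\ by implicitly reading ``$\tau_{\bold m,k}$-formula'' as one built from strictly atomic (unnested) formulas; if you drop your reduction claim and adopt that reading, your proof coincides with the paper's.
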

\begin{proof}
By induction on the complexity of the formula $\phi$. This is true for strictly atomic formulas by the assumption
and it is easy to see that if it holds for  $\phi, \phi_0$ and  $\phi_1$, then it also holds for $\neg\phi$ and $\phi_0 \wedge \phi_1$.
Now suppose that $\phi(\nu_0, \cdots, \nu_{l-1})=\exists \nu \psi(\nu, \nu_0, \cdots, \nu_{l-1})$
and let $x_0, \cdots, x_{l-1} \in \dom(f)$. If $x \in \dom(f)$ is such that
$\MMB^{\bold m, 1}_n \models \psi_{\dom(f)}(x, x_0, \cdots, x_{l-1})$, then by the  induction hypothesis
$\MMB^{\bold m, 2}_n \models \psi_{\range(f)}(f(x), f(x_0), \cdots, f(x_{l-1}))$ and hence $\MMB^{\bold m, 2}_n \models \exists \nu \in \range(f) \psi_{\range(f)}(\nu, f(x_0), \cdots, f(x_{l-1}))$. It then follows that $\MMB^{\bold m, 2}_n \models \phi_{\range(f)}(f(x_0), \cdots, f(x_{l-1}))$. Conversely suppose that for some $y \in \range(f)$,  $\MMB^{\bold m, 2}_n \models \psi_{\range(f)}(y, f(x_0), \cdots, f(x_{l-1}))$. Let $x \in \dom(f)$ be such that $y=f(x)$. Then by the induction hypothesis
$\MMB^{\bold m, 1}_n \models \psi_{\dom(f)}(x, x_0, \cdots, x_{l-1})$ and hence $\MMB^{\bold m, 1}_n \models \exists \nu \in \dom(f)\psi_{\dom(f)}(\nu, x_0, \cdots, x_{l-1})$. Thus $\MMB^{\bold m, 1}_n \models \phi_{\dom(f)}(x_0, \cdots, x_{l-1})$.
\end{proof}
\begin{definition}
\label{ehfrgame}
Suppose $\bold m$ is a pseudo ultraproduct problem and $n<\omega.$
Then $\bold{k}_{\bold{m}, n}$ is the maximal $k \leq n$ such that the protagonist
has a winning strategy in the Ehrenfeucht–Fra\"{i}ss\'{e} game $\Game^n_k(\bold m)=\Game_{\tau_{\bold m, k}, k}(\MMB^{\bold m, 1}_n, \MMB^{ \bold m, 2}_n)$.
Set also $\bold{k}_{\bold{m}}=\langle \bold{k}_{\bold{m}, n}: n<\omega \rangle.$
\end{definition}
We now define the notion of an ultraproduct problem.
\begin{definition}
\label{a2}
An {\em ultraproduct problem} is a pseudo ultraproduct problem $\bold m$ such that
$\limsup\limits_{n<\omega}\bold{k}_{\bold m, n}=\infty.$
\end{definition}
To each pseudo ultraproduct problem we  assign a natural countably generated filter on $\omega$ and a cardinal invariant, which play
an important role for the rest of the paper.
We start by defining such notions in a more general context. Let us first fix some notation.
\begin{notation}
(1) For a filter $\mathcal D$ on $\omega,$ let $\forall_{\mathcal D}x \phi(x)$ mean ``$\exists A \in \mathcal{D} ~\forall n \in A ~\phi(n)$''.
\\
(2) The notation $\forall^* x \phi(x)$ means ``for all but finitely many $x, \phi(x)$ holds''.
\end{notation}
\begin{definition}
\label{g32}
\begin{enumerate}
\item Given a sequence $\bold k=\langle \bold k_n: n<\omega \rangle \in {}^\omega{\omega}$, let $\mathcal D_{\bold k}$ be the filter on
 $\omega$ generated by co-bounded subsets of $\omega$
and the sets
\[
\{ n<\omega: \bold{k}_n > k           \},
\]
where $k<\omega.$

\item Suppose $\bold k=\langle \bold k_n: n<\omega \rangle \in {}^\omega{\omega}$ is such that $\limsup\limits_{n<\omega} \bold k_n=\infty,$ then set
  \begin{center}
  $\bold{\mathfrak d}_{\bold k}=\min\bigg\{|\mathcal{F}|: \mathcal{F} \subseteq \prod\limits_{n<\omega}[\omega]^{\bold{k}_n}$ and $\left(\forall \eta \in {}^\omega{\omega}\right)\left(  \exists f \in \mathcal{F}\right)\big( \forall_{\mathcal{D}_{\bold k}}n \left(\eta(n)\in f(n)\right)\big)   \bigg\}$.
  \end{center}
\end{enumerate}
\end{definition}
\begin{remark}
Suppose $\bold k=\langle \bold k_n: n<\omega \rangle \in {}^\omega{\omega}$.
If $\limsup\limits_{n<\omega} \bold k_n=\infty,$ then
 $\mathcal{D}_{\bold k}$ is a countably generated non-principal proper filter on $\omega$. Otherwise, $\mathcal{D}_{\bold k}=\mathcal{P}(\omega)$.
 \end{remark}

\begin{definition}
\label{filter}
 If $\bold m$ is a pseudo ultraproduct problem, then  set $\mathcal{D}_{\bold{m}}=\mathcal D_{\bold k_{\bold m}}$. Furthermore, if
 it is  an ultraproduct problem, then set $\bold{\mathfrak d}_{\bold m}=\bold{\mathfrak d}_{\bold k_{\bold m}}.$
\end{definition}

The next simple lemma will be very useful.
\begin{lemma}
\label{eqv3}
Suppose  $\bold m$ in an ultraproduct problem.
\begin{enumerate}
\item If $\mathcal D \supseteq \mathcal D_{\bold m}$ is a non-principal ultrafiler on $\omega$.
Then the ultraproducts
$\prod\limits_{\mathcal D} \MMB^{\bold m, 1}_n$ and $\prod\limits_{\mathcal D} \MMB^{\bold m, 2}_n$
are elementary equivalent.

\item  If $\mathcal D$ is a non-principal ultrafiler on $\omega$ and
$\prod\limits_{\mathcal D} \MMB^{\bold m, 1}_n \equiv \prod\limits_{\mathcal D} \MMB^{\bold m, 2}_n$,
then $D \supseteq \mathcal D_{\bold m}$.
\end{enumerate}
\end{lemma}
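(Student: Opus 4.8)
The plan is to deduce both clauses from \L o\'{s}'s theorem, using the games $\Game^n_k(\bold m)$ only as a tool for comparing the $\tau_{\bold m,k}$-reducts of $\MMB^{\bold m,1}_n$ and $\MMB^{\bold m,2}_n$ on first-order sentences of bounded quantifier rank. I shall use two elementary facts: (i) a winning strategy of the protagonist in $\Game^n_{k'}(\bold m)$ restricts to one in $\Game^n_k(\bold m)$ for $k\le k'$ (the latter game has fewer rounds, smaller antagonist moves and a coarser vocabulary), so by \cref{ehfrgame} the protagonist wins $\Game^n_k(\bold m)$ exactly when $k\le\bold k_{\bold m,n}$; and (ii) each $\Game^n_k(\bold m)$ is a finite-length perfect-information game, hence determined.

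For clause~(1), fix a non-principal ultrafilter $\mathcal D\supseteq\mathcal D_{\bold m}$ and a $\tau_{\bold m}$-sentence $\varphi$, which we may take in negation normal form. By \L o\'{s}'s theorem it is enough to produce $X\in\mathcal D$ with $\MMB^{\bold m,1}_n\models\varphi\Leftrightarrow\MMB^{\bold m,2}_n\models\varphi$ for all $n\in X$. Since $\varphi$ involves only finitely many symbols it is a $\tau_{\bold m,k}$-sentence for some $k\ge\operatorname{qr}(\varphi)$, and I claim that $X:=\{n<\omega:\bold k_{\bold m,n}\ge k\}$ works. Fix $n\in X$; by (i) the protagonist has a winning strategy in $\Game^n_k(\bold m)$. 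Suppose $\MMB^{\bold m,1}_n\models\varphi$, so the $\exists$-player has a winning strategy in the evaluation (Hintikka) game for $\varphi$ on $\MMB^{\bold m,1}_n$. We describe a winning $\exists$-strategy in the evaluation game for $\varphi$ on $\MMB^{\bold m,2}_n$ by running, alongside it, a copy of the evaluation game on $\MMB^{\bold m,1}_n$ and a play of $\Game^n_k(\bold m)$, linked through the protagonist's (nested) partial isomorphisms $f$: a $\forall$-move $b$ in $\MMB^{\bold m,2}_n$ is relayed as the antagonist move $(\emptyset,\{b\})$ and then played in the $\MMB^{\bold m,1}_n$-copy as $a:=f^{-1}(b)$; a reply $a'$ of the $\exists$-strategy there is relayed as the antagonist move $(\{a'\},\emptyset)$ and played back in $\MMB^{\bold m,2}_n$ as $b':=f(a')$; $\vee$- and $\wedge$-choices are copied between the two copies. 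Since $\operatorname{qr}(\varphi)\le k\le k+1$, every play uses at most $k+1$ element-moves, so the rounds of $\Game^n_k(\bold m)$ suffice. At the end of a play the $\exists$-player has won the $\MMB^{\bold m,1}_n$-copy, so the quantifier-free leaf formula holds of the $\MMB^{\bold m,1}_n$-assignment $\bar a$; the final $f$ has $\bar a\subseteq\dom(f)$ and maps $\bar a$ onto the $\MMB^{\bold m,2}_n$-assignment, so by the quantifier-free case of \cref{ss1} the leaf formula holds there too, i.e. we have won. Hence $\MMB^{\bold m,1}_n\models\varphi$ implies $\MMB^{\bold m,2}_n\models\varphi$, and the converse is symmetric (swap the two models, equivalently pass to inverse partial isomorphisms). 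Finally $X=\{n:\bold k_{\bold m,n}>k-1\}$ is one of the generators of $\mathcal D_{\bold m}\subseteq\mathcal D$, so $X\in\mathcal D$, as needed.

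For clause~(2) I argue contrapositively: let $\mathcal D$ be a non-principal ultrafilter with $\mathcal D\not\supseteq\mathcal D_{\bold m}$. As $\mathcal D$ contains all cofinite subsets of $\omega$, some generator $\{n:\bold k_{\bold m,n}>j\}$ of $\mathcal D_{\bold m}$ fails to belong to $\mathcal D$, so $Z:=\{n:\bold k_{\bold m,n}\le j\}\in\mathcal D$; set $k:=j+1$ and $q:=k(k+1)$. For $n\in Z$ with $n\ge k$ the protagonist does not win $\Game^n_k(\bold m)$. On the other hand, if $\MMB^{\bold m,1}_n\restriction\tau_{\bold m,k}$ and $\MMB^{\bold m,2}_n\restriction\tau_{\bold m,k}$ satisfied the same sentences of quantifier rank $\le q$, the protagonist \emph{would} win $\Game^n_k(\bold m)$: the reducts then admit a $q$-step back-and-forth system, the protagonist keeps its partial isomorphism inside it, and the at most $k$ new elements demanded in each of the $k+1$ rounds use up at most $q=k(k+1)$ steps of the system in total, while each map in the system is a $\tau_{\bold m,k}$-partial isomorphism, so the protagonist never gets stuck. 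Therefore, for these $n$, the two reducts disagree on some $\tau_{\bold m,k}$-sentence of quantifier rank $\le q$. Since $\tau_{\bold m,k}$ is finite and $q$ is fixed, there are, up to logical equivalence, only finitely many such sentences, say $\psi_1,\dots,\psi_N$. Colour each relevant $n$ by the least $i$ with $\MMB^{\bold m,1}_n$ and $\MMB^{\bold m,2}_n$ disagreeing on $\psi_i$, and then by which of the two models satisfies $\psi_i$; using twice that $\mathcal D$ is an ultrafilter, we get a single $\psi:=\psi_{i_0}$ and a set $W\in\mathcal D$ with $\MMB^{\bold m,1}_n\models\psi$ and $\MMB^{\bold m,2}_n\models\neg\psi$ for every $n\in W$ (or the same with the two models interchanged). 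By \L o\'{s}'s theorem $\prod\limits_{\mathcal D}\MMB^{\bold m,1}_n$ and $\prod\limits_{\mathcal D}\MMB^{\bold m,2}_n$ then disagree on $\psi$, so they are not elementarily equivalent.

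The step I expect to be the main obstacle is carrying out these two game-versus-logic translations with the correct quantifier-rank bookkeeping. The asymmetry is that $\Game^n_k(\bold m)$ lets the antagonist push a finite subset, of size up to $k$, into \emph{each} of the two models at \emph{each} of the $k+1$ rounds, so reconciling it with the one-element-at-a-time back-and-forth game costs a factor of about $k(k+1)$ in quantifier rank in clause~(2); and in clause~(1) one must verify that the interleaved construction really yields a strategy for the evaluation game on $\MMB^{\bold m,2}_n$ (depending only on the opponent's moves there). The other ingredients --- \L o\'{s}'s theorem, determinacy of finite games, the quantifier-free case of \cref{ss1}, and finiteness up to equivalence of the bounded-rank sentences over a finite vocabulary --- are routine.
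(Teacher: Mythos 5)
Your proof is correct, and while it rests on the same underlying mechanism as the paper's (the games $\Game^n_k(\bold m)$ control agreement of the factors, and \L o\'{s} lifts this to the ultraproducts), the execution is genuinely different in both clauses. For (1), the paper proves $(*)_\phi$ by induction on the complexity of $\phi$ directly at the level of the two ultraproducts, choosing witnesses coordinatewise and invoking Lemma~\ref{ss1} on relativized quantifiers at each existential step; you instead isolate the purely local statement that the two factors agree on $\varphi$ whenever $\bold k_{\bold m,n}\ge k\ge\operatorname{qr}(\varphi)$ (via the interleaved Hintikka/EF-game argument) and then apply \L o\'{s} once to the generator $\{n:\bold k_{\bold m,n}>k-1\}$ of $\mathcal D_{\bold m}$. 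This is the classical route and makes the role of $\mathcal D_{\bold m}$ more transparent, at the price of having to prove explicitly the monotonicity fact (i) and the game-to-quantifier-rank translation that the paper leaves implicit. For (2) the difference matters more: the paper passes from ``the antagonist wins $\Game^n_{k+1}(\bold m)$'' straight to a purely \emph{existential} separating sentence $\exists\bar x\,\phi_n$, which as stated only reflects a first-round loss, whereas your contrapositive via the Ehrenfeucht--Fra\"{i}ss\'{e} theorem produces a separating $\tau_{\bold m,k}$-sentence of quantifier rank at most $q=k(k+1)$ --- which is what a multi-round loss actually yields --- and your finiteness-up-to-equivalence count over the finite vocabulary then plays the role of the paper's pigeonhole on the $\phi_n$; so your clause (2) is the more careful rendering of the same idea. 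The one caveat, which your argument shares with Lemma~\ref{ss1} itself, is that the protagonist's maps are only required to preserve \emph{strictly} atomic formulas, so literals containing nested terms need an unnesting step (at a bounded cost in quantifier rank) both at your leaf-check in (1) and in your EF translation in (2); this is routine and affects both proofs equally.
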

\begin{proof}
(1) Suppose $\bold m$ is an ultraproduct problem
and $D \supseteq \mathcal D_{\bold m}$ is a non-principal ultrafiler on $\omega$. Let $\phi$ be a $\tau_{\bold m}$-statement.
Then, for some $k<\omega$,
it is a $\tau_{\bold m, k}$-statement.
We prove by induction on the complexity of $\phi$ that
\[
(*)_\phi: \qquad \prod\limits_{\mathcal D} \MMB^{\bold m, 1}_n \models \phi \iff \prod\limits_{\mathcal D} \MMB^{\bold m, 2}_n \models \phi.
\]
If $\phi$ is a strictly atomic formula, then
$E=\{ n< \omega: \bold k_{\bold m, n} > k    \} \in \mathcal D_{\bold m} \subseteq \mathcal D,$ and for each
$n \in E$ we have $\MMB^{\bold m, 1}_n \models \phi$ if and only if $\MMB^{\bold m, 2}_n \models \phi$,
from which the result follows. It is also clear that if $(*)_{\phi}$
holds then $(*)_{\neg\phi}$ holds and that if $(*)_{\phi_0}$ and $(*)_{\phi_1}$ hold, then $(*)_{\phi_0 \wedge \phi_1}$ holds.
Now suppose that $\phi=\exists x \psi(x)$ and $(*)_{\psi}$ is true.
Suppose $\prod\limits_{\mathcal D} \MMB^{\bold m, 1}_n \models \phi$. Then for some $\bar x=\langle   x_n: n<\omega   \rangle \in \prod_{n<\omega}\MMB^{\bold m, 1}_n,$
$\prod\limits_{\mathcal D} \MMB^{\bold m, 1}_n \models \psi([\bar x]_{\mathcal D})$, and hence
\[
A=\{ n<\omega:  \MMB^{\bold m, 1}_n \models \psi(x_n)                \} \in \mathcal D.
\]
We may further suppose that for any $n \in A, \bold k_{\bold m, n} > k.$
For $n\in A$ let $g_n$ be the last move of the  protagonist in the game $\Game^n_{\bold k_{\bold m, n}}(\bold m)$,
in which antagonist always chooses $A_\ell=\{x_n \}$ and $B_\ell=\emptyset$.
Thus for any such $n$, by the induction hypothesis and Lemma \ref{ss1} we have
$\MMB^{\bold m, 2}_n \models \psi(g_n(x_n)))$. Let $\bar y=\langle  g_n(x_n): n<\omega      \rangle$. Then
$\prod\limits_{\mathcal D} \MMB^{\bold m, 2}_n \models \psi([\bar y]_{\mathcal D})$ and hence $\prod\limits_{\mathcal D} \MMB^{\bold m, 2}_n \models \phi$. Conversely, suppose that $\prod\limits_{\mathcal D} \MMB^{\bold m, 2}_n \models \phi$.
Then for some $\bar y= \langle   y_n: n<\omega  \rangle \in \prod_{n<\omega}\MMB^{\bold m, 2}_n,$
$\prod\limits_{\mathcal D} \MMB^{\bold m, 2}_n \models \psi([\bar y]_{\mathcal D})$, and hence
\[
B=\{ n<\omega:  \MMB^{\bold m, 2}_n \models \psi(y_n)                \} \in \mathcal D.
\]
We again assume that $\bold k_{\bold m, n} > k$ for every $n \in B.$
For $n\in B$, let $h_n$ be the last move of the  protagonist in the game $\Game^n_{\bold k_{\bold m, n}}(\bold m)$,
in which antagonist always chooses $A_\ell=\emptyset$ and $B_\ell=\{y_n \}$.
Thus for any such $n$, $y_n \in \range(h_n)$ and hence for some $x_n, y_n=h_n(x_n)$.  By the induction hypothesis and Lemma \ref{ss1} we have
$\MMB^{\bold m, 1}_n \models \psi(x_n)$. Let $\bar x=\langle x_n: n<\omega      \rangle$. Then
$\prod\limits_{\mathcal D} \MMB^{\bold m, 1}_n \models \psi([\bar x]_{\mathcal D})$ and hence $\prod\limits_{\mathcal D} \MMB^{\bold m, 1}_n \models \phi$. We are done.

(2) Suppose by the way of contradiction that $D \nsupseteq \mathcal D_{\bold m}$. Let $k<\omega$ be such that $\{ n<\omega:  \bold k_{\bold m, n} > k           \} \in \mathcal D_{\bold m} \setminus D$. It then follows that
\[
A=\{ n<\omega:  \bold k_{\bold m, n} \leq  k           \} \in D.
\]

Thus for any $n \in A$ with $n>k$, as $ \bold k_{\bold m, n} < k+1,$  the protagonist loses the game $\Game^n_{k+1}(\bold m)$, and hence antagonist has a winning strategy.  In particular, by enlarging $k$ if necessary, we can find some  formula $\phi_n(\nu_0, \cdots, \nu_{l_n-1})$, which is a boolean combination of strict atomic formulas of $\tau_{\bold m, k+1}$, and some $a_0, \cdots, a_{l_n-1} \in \MMB^{\bold m, 1}_n$ such that $ \MMB^{\bold m, 1}_n \models \phi_n (a_0, \cdots, a_{l_n-1})$,
but for no $b_0, \cdots, b_{l_n-1} \in \MMB^{\bold m, 2}_n$ we have $ \MMB^{\bold m, 2}_n \models \phi_n(b_0, \cdots, b_{l_n-1})$.
It thus follows that
$ \MMB^{\bold m, 1}_n \models \exists x_0 \cdots x_{l_n-1}\phi_n$
but $\MMB^{\bold m, 2}_n \models \neg\exists x_0 \cdots x_{l_n-1}\phi_n$.
As $\tau_{\bold m, k+1}$ is finite and $\mathcal D$ is an ultrafiler, for some set $B \subseteq A$ in $\mathcal D$ we have $\phi_n=\phi$ for some fixed formula $\phi$. But then
$\prod\limits_{\mathcal D} \MMB^{\bold m, 1}_n \models \exists x_0 \cdots x_{l_n-1}\phi$ while  $\prod\limits_{\mathcal D} \MMB^{\bold m, 1}_n \models \neg\exists x_0 \cdots x_{l_n-1}\phi,$ which contradicts our assumption $\prod\limits_{\mathcal D} \MMB^{\bold m, 1}_n \equiv \prod\limits_{\mathcal D} \MMB^{\bold m, 2}_n$.
\end{proof}

We would like to construct a model of $\ZFC$ in which the continuum is large and
for every pseudo ultraproduct problem $\bold m$, if $\mathcal D$ is a non-principal ultrafilter on $\omega$
and if the structures $\prod\limits_{\mathcal D} \MMB^{\bold m, 1}_n$ and $\prod\limits_{\mathcal D} \MMB^{\bold m, 2}_n$ are elementarily equivalent, then
  $\prod\limits_{\mathcal D} \MMB^{\bold m, 1}_n \cong \prod\limits_{\mathcal D} \MMB^{\bold m, 2}_n$.
  We may assume that $\bold m$ is an ultraproduct problem, as otherwise $\mathcal D_{\bold m}=\mathcal P(\omega)$,
  and everything becomes trivial.

The next lemma reduces the construction of such a model to controlling the size of
$\bold{\mathfrak d}_{\bold m}$'s.

\begin{proposition}
\label{f207} Suppose $\bold m$ is an ultraproduct problem, $\mathfrak{d}_{\bold m}=\aleph_1$ and $\mathcal D$ is a non-principal ultrafilter on $\omega$ such that $\prod\limits_{\mathcal D} \MMB^{\bold m, 1}_n$ and  $\prod\limits_{\mathcal D} \MMB^{\bold m, 2}_n$
are elementarily equivalent. Then these ultraproducts are isomorphic, i.e., $\prod\limits_{\mathcal D} \MMB^{\bold m, 1}_n \cong \prod\limits_{\mathcal D} \MMB^{\bold m, 2}_n$.
\end{proposition}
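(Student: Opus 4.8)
The plan is to first invoke Lemma~\ref{eqv3}(2) to get $\mathcal D\supseteq\mathcal D_{\bold m}$, and then to build the isomorphism by a transfinite back-and-forth of length $\omega_1$, using the Ehrenfeucht--Fra\"iss\'e games of Definition~\ref{ehfrgame0} to manufacture the approximations and the hypothesis $\mathfrak d_{\bold m}=\aleph_1$ to guarantee that $\omega_1$ many steps suffice. Since $\mathcal D\supseteq\mathcal D_{\bold m}$, for every $k<\omega$ we have $\{n:\bold k_{\bold m,n}>k\}\in\mathcal D$; fix, for each $n$, a winning strategy $\sigma_n$ for the protagonist in $\Game^n_{\bold k_{\bold m,n}}(\bold m)$. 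Using that $\cf(\omega_1)>\omega$ (so every map $\omega\to\omega_1$ has bounded range) together with the defining property of $\mathfrak d_{\bold m}=\aleph_1$, I would pass to families $\mathcal G^{1},\mathcal G^{2}$, each of size $\aleph_1$, with $\mathcal G^{\ell}\subseteq\prod_n[\MMB^{\bold m,\ell}_n]^{\le\bold k_{\bold m,n}}$, such that every $\bar x\in\prod_n\MMB^{\bold m,\ell}_n$ has some $g\in\mathcal G^{\ell}$ with $\forall_{\mathcal D}n\,(x_n\in g(n))$; thus every element of $\prod_{\mathcal D}\MMB^{\bold m,\ell}_n$ is \emph{captured} by a member of $\mathcal G^{\ell}$.

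The recursion produces an increasing chain $\langle F_\xi:\xi<\omega_1\rangle$ of partial isomorphisms $F_\xi\colon\prod_{\mathcal D}\MMB^{\bold m,1}_n\to\prod_{\mathcal D}\MMB^{\bold m,2}_n$, each coded coordinate-wise by partial one-to-one maps $f^\xi_n\colon\MMB^{\bold m,1}_n\to\MMB^{\bold m,2}_n$ arising as legal positions in plays of $\Game^n_{\bold k_{\bold m,n}}(\bold m)$ that follow $\sigma_n$, via $F_\xi([\bar x]_{\mathcal D})=[\langle f^\xi_n(x_n)\rangle_n]_{\mathcal D}$ whenever $\{n:x_n\in\dom(f^\xi_n)\}\in\mathcal D$. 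Because clause~(d) of Definition~\ref{ehfrgame0} forces each $f^\xi_n$ to preserve strictly atomic formulas and their negations, {\L}o\'s's theorem makes every $F_\xi$ a partial embedding (this is the quantifier-free case of Lemma~\ref{ss1}), and all the content lies in the bookkeeping. Fixing an enumeration $\langle g_\xi:\xi<\omega_1\rangle$ of $\mathcal G^1\cup\mathcal G^2$ in which every member occurs, at stage $\xi$ one \emph{handles} $g_\xi$: if $g_\xi\in\mathcal G^1$, one forces every element captured by $g_\xi$ into $\dom(F_{\xi+1})$ by arranging $g_\xi(n)\subseteq\dom(f^{\xi+1}_n)$ for $\mathcal D$-many $n$, i.e.\ by feeding $g_\xi(n)$ to the antagonist in the game at coordinate $n$; symmetrically for $g_\xi\in\mathcal G^2$ on the range side. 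Since $\mathcal G^1$ and $\mathcal G^2$ capture everything, $\bigcup_\xi\dom(F_\xi)=\prod_{\mathcal D}\MMB^{\bold m,1}_n$ and $\bigcup_\xi\range(F_\xi)=\prod_{\mathcal D}\MMB^{\bold m,2}_n$, so $F:=\bigcup_\xi F_\xi$ is a bijection preserving atomic formulas and their negations, hence an isomorphism.

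The main obstacle is carrying out the coordinate-wise games coherently across the $\omega_1$ recursion. Each game $\Game^n_{\bold k_{\bold m,n}}(\bold m)$ has only $\bold k_{\bold m,n}+1$ rounds, so one cannot keep a single monotone increasing play at coordinate $n$ running through all $\omega_1$ stages; instead one re-initializes the game at $n$ at the stages where $n$ is relevant, while still ensuring that the $F_\xi$ form a chain at the level of the ultraproducts (they need not be nested coordinate-wise). The device is that by any stage $\xi<\omega_1$ only countably many ``commitments'' are live at a given coordinate, so for $\mathcal D$-many $n$ --- namely those with $\bold k_{\bold m,n}$ large, a set in $\mathcal D$ because $\mathcal D\supseteq\mathcal D_{\bold m}$ --- the antagonist's opening moves in the re-initialized game at $n$ can encode both $g_\xi(n)$ and a long enough initial segment of those commitments, so that $F_{\xi+1}\supseteq F_{\xi}$ holds on a $\mathcal D$-large set of coordinates; the bound $\mathfrak d_{\bold m}=\aleph_1$ is exactly what keeps the number of stages, and hence of commitments, down to $\aleph_1$. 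Limit stages $\lambda<\omega_1$ are handled the same way, using $\cf(\lambda)=\omega$ so that $F_\lambda:=\bigcup_{\xi<\lambda}F_\xi$ is already captured by countably many $g_\xi$'s and the countably many relevant finite plays at each coordinate with $\bold k_{\bold m,n}$ large can be amalgamated into one legal play. Everything else --- {\L}o\'s's theorem, Lemma~\ref{ss1} and Lemma~\ref{eqv3} --- is routine.
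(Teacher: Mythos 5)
Your overall architecture matches the paper's: reduce to $\mathcal D\supseteq\mathcal D_{\bold m}$ via Lemma~\ref{eqv3}, split the size-$\aleph_1$ models into continuous chains of countable elementary submodels so that $\cf(\omega_1)>\omega$ plus $\mathfrak d_{\bold m}=\aleph_1$ yields $\aleph_1$-sized covering families, and run an $\omega_1$-length back-and-forth whose approximations are coordinate-wise positions in the games $\Game^n_{\bold k_{\bold m,n}}(\bold m)$. You have also correctly isolated the one real difficulty: each game at coordinate $n$ has only finitely many rounds, while the recursion has length $\omega_1$. But your proposed resolution --- re-initializing the game at coordinate $n$ and having the antagonist's opening move ``encode the live commitments'' so that $F_{\xi+1}\supseteq F_\xi$ on a $\mathcal D$-large set of coordinates --- does not work. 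The antagonist can only demand that certain sets be included in the domain and range of the protagonist's next function; it has no way to dictate the \emph{matching}. A protagonist playing a winning strategy in a freshly started game, fed $\dom(f^\xi_n)\cup g_\xi(n)$ and $\range(f^\xi_n)$, may pair these elements up entirely differently from $f^\xi_n$, so there is no reason that $f^{\xi+1}_n\supseteq f^\xi_n$. Nor can you repair this at the level of the ultraproduct alone: $\dom(F_\xi)$ already contains the classes of all $\bar x$ with $x_n\in\dom(f^\xi_n)$ on a $\mathcal D$-large set, which is typically of size continuum, so agreement cannot be enforced element by element.

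The paper avoids re-initialization altogether. It introduces the poset $\AP_{\bold m}$ of approximations $\bold s=\langle\bold g_{\bold s,n}:n<\omega\rangle$, where $\bold g_{\bold s,n}$ is an initial segment of a single play (protagonist following a winning strategy) of length $l_{\bold s,n}$, subject to the crucial side condition $\lim_{\mathcal D_{\bold m}}(\bold k_{\bold m,n}-l_{\bold s,n})=\infty$: at every stage, $\mathcal D_{\bold m}$-many coordinates retain an arbitrarily large reserve of unused rounds. Successive approximations genuinely extend one another coordinate-wise on a $\mathcal D_{\bold m}$-large set, so the induced partial elementary maps $h_{\mathcal D,\bold s}$ are literally nested and no agreement problem arises; a fixed coordinate simply stops being extended once its game is exhausted. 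The technical heart is then Claim~\ref{f201}(2): an $\omega$-chain in $\AP_{\bold m}$ has an upper bound, proved by using the $\aleph_1$-saturation of $(\omega,<)^\omega/\mathcal D_{\bold m}$ to find a single function $\eta$ with $\lim_{\mathcal D_{\bold m}}\eta(n)=\infty$ lying below every reserve function $\langle\bold k_{\bold m,n}-l_{\bold s_\ell,n}:n<\omega\rangle$, and then taking $\bold g_{\bold s,n}=\bold g_{\bold s_{\ell_n},n}$ for a suitably defined $\ell_n\le\eta(n)$. This is the missing idea in your write-up, and your limit-stage ``amalgamation'' of non-nested plays cannot substitute for it.
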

\begin{remark}
By \cite{sh326}, it is consistent that $\mathfrak d =\aleph_1 < \bold{\mathfrak d}_{\bold m}$, where $\mathfrak d$ is the dominating number. It also follows from \cite{sh326} that in
Lemma \ref{f207}, we can not replace $\mathfrak{d}_{\bold m}=\aleph_1$ by $\mathfrak d =\aleph_1$.
\end{remark}
The following definition plays a key role in the proof of Lemma \ref{f207}.
\begin{definition}
\label{app}
Suppose $\bold m$ is an ultraproduct problem.
\begin{enumerate}
\item We say that $\bold s \in \AP_{\bold m}$ iff
\begin{enumerate}
\item $\bold s= \langle \bold g_{\bold s, n}: n<\omega      \rangle$,

\item $\bold g_{\bold s, n}$ is an initial segment of a play of $\Game^n_{\bold k_{\bold m, n}}(\bold m)$ of length $l_{\bold s, n}$ with the last function
$f_{\bold s, n}$, where  the protagonist plays with a winning strategy,
\item $\lim_{\mathcal{D}_{\bold m}}\langle \bold{k}_{\bold m, n} - l_{\bold s, n}: n<\omega \rangle=\infty.$
\end{enumerate}
\item Define the partial order $\leq_{\AP_{\bold m}}$ on $\AP_{\bold m}$ by $\bold s \leq_{\AP_{\bold m}}  \bold t$ iff
\[
\{n: \bold g_{\bold s, n} \text{~is an initial segment of~} \bold g_{\bold t, n}      \} \in \mathcal{D}_{\bold m}.
\]
\end{enumerate}
\end{definition}
We are now ready to complete the proof of Lemma \ref{f207}.
\begin{proof}[Proof of Lemma \ref{f207}]
Assume the hypotheses in the lemma hold.
 The proof of the next claim is evident.
 \begin{claim}
\label{f201}
\begin{enumerate}
\item $\AP_{\bold m} \neq \emptyset.$

\item If $\langle \bold s_\ell: \ell<\omega        \rangle$ is a $\leq_{\AP_{\bold m}}$-increasing sequence from $\AP_{\bold m},$ then it has a $\leq_{\AP_{\bold m}}$-upper bound.

\item Suppose $\bold s \in \AP_{\bold m}$,  $\ell \in \{1,2\}$, and $\bar w=\langle  w_n: n<\omega    \rangle \in \prod\limits_n [\MMB^{\bold m, \ell}_n]^{\bold k_{\bold m, n}}$. Then for some $\bold t \in \AP_{\bold m},$ we have
    \begin{enumerate}
    \item $\bold s \leq_{\AP_{\bold m}} \bold t,$
    \item if $l_{\bold s, n} < \bold k_{\bold m, n},$ then
    \begin{enumerate}
    \item $\ell=1 \implies w_n \subseteq \dom(f_{\bold t, n})$,
    \item $\ell=2 \implies w_n \subseteq \range(f_{\bold t, n})$.
    \end{enumerate}
    \end{enumerate}
\end{enumerate}
\end{claim}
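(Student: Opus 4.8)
The plan is to establish the three items separately, invoking only the game-theoretic content of Definition \ref{app} and the fact that $\mathcal D_{\bold m}$ is a filter containing every co-finite subset of $\omega$.

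For (1) I would take $\bold s$ to be the empty approximation: for each $n$ let $\bold g_{\bold s, n}$ be the empty initial segment of a play of $\Game^n_{\bold k_{\bold m, n}}(\bold m)$, so that $l_{\bold s, n} = 0$ and $f_{\bold s, n} = \emptyset$. Since by the definition of $\bold k_{\bold m, n}$ the protagonist has a winning strategy in $\Game^n_{\bold k_{\bold m, n}}(\bold m)$, clause (b) of Definition \ref{app} holds vacuously, while clause (c) reduces to $\lim_{\mathcal D_{\bold m}}\langle \bold k_{\bold m, n} : n < \omega\rangle = \infty$; this is immediate since $\{n : \bold k_{\bold m, n} > k\} \in \mathcal D_{\bold m}$ for every $k$ by the very definition of $\mathcal D_{\bold m} = \mathcal D_{\bold k_{\bold m}}$. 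Hence $\bold s \in \AP_{\bold m}$.

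For (3) I would extend $\bold s$ by a single round of each relevant game. Fix $n$ with $l_{\bold s, n} < \bold k_{\bold m, n}$, so the play $\bold g_{\bold s, n}$ admits at least one further round in $\Game^n_{\bold k_{\bold m, n}}(\bold m)$. Let the antagonist play $(A, B) = (w_n, \emptyset)$ when $\ell = 1$ and $(A, B) = (\emptyset, w_n)$ when $\ell = 2$; this is a legal move because $|w_n| \leq \bold k_{\bold m, n}$. As $\bold g_{\bold s, n}$ is played according to a winning strategy, that strategy returns a legal response $f_{\bold t, n} \supseteq f_{\bold s, n}$, which by clause (e) of Definition \ref{ehfrgame0} satisfies $w_n \subseteq \dom(f_{\bold t, n})$ (resp. $w_n \subseteq \range(f_{\bold t, n})$). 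For all other $n$ set $\bold g_{\bold t, n} = \bold g_{\bold s, n}$. Then $\bold g_{\bold s, n}$ is an initial segment of $\bold g_{\bold t, n}$ for every $n$, giving $\bold s \leq_{\AP_{\bold m}} \bold t$, and since $l_{\bold t, n} \leq l_{\bold s, n} + 1$, clause (c) for $\bold t$ follows from clause (c) for $\bold s$, as subtracting a bounded amount does not affect a $\mathcal D_{\bold m}$-limit being infinite; thus $\bold t \in \AP_{\bold m}$ witnesses (3).

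The main obstacle is (2), a diagonalization against an increasing sequence $\langle \bold s_\ell : \ell < \omega \rangle$. Put $C_\ell = \{n : \bold g_{\bold s_\ell, n} \text{ is an initial segment of } \bold g_{\bold s_{\ell+1}, n}\} \in \mathcal D_{\bold m}$, and define a $\subseteq$-decreasing sequence in $\mathcal D_{\bold m}$ by
\[
B_\ell = \bigcap_{i \leq \ell} \Big( \{n : n \geq i\} \cap \bigcap_{j < i} C_j \cap \{n : \bold k_{\bold m, n} - l_{\bold s_i, n} > i\} \Big),
\]
where each factor lies in $\mathcal D_{\bold m}$, the last because $\bold s_i \in \AP_{\bold m}$, and $\bigcap_\ell B_\ell = \emptyset$ thanks to the co-finite factors $\{n : n \geq i\}$. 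For $n \in B_0$ set $\ell(n) = \max\{\ell : n \in B_\ell\}$, which is finite since the $B_\ell$ decrease to $\emptyset$, and let $\bold g_{\bold t, n} = \bold g_{\bold s_{\ell(n)}, n}$; for $n \notin B_0$ let $\bold g_{\bold t, n}$ be the empty play. Each $\bold g_{\bold t, n}$ is then an initial segment of a winning play, so clause (b) holds. For clause (c): if $n \in B_j$ then $\ell(n) \geq j$ and $n \in B_{\ell(n)}$, so $\bold k_{\bold m, n} - l_{\bold t, n} = \bold k_{\bold m, n} - l_{\bold s_{\ell(n)}, n} > \ell(n) \geq j$, whence $B_j \subseteq \{n : \bold k_{\bold m, n} - l_{\bold t, n} > j\} \in \mathcal D_{\bold m}$ and the limit is infinite. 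Finally, for $\bold s_\ell \leq_{\AP_{\bold m}} \bold t$, note that for $n \in B_{\ell+1}$ we have $\ell(n) > \ell$ and $n \in \bigcap_{j < \ell(n)} C_j$, so chaining the initial-segment relation along $\bold s_\ell, \bold s_{\ell+1}, \dots, \bold s_{\ell(n)}$ shows $\bold g_{\bold s_\ell, n}$ is an initial segment of $\bold g_{\bold s_{\ell(n)}, n} = \bold g_{\bold t, n}$; hence $B_{\ell+1} \subseteq \{n : \bold g_{\bold s_\ell, n} \text{ is an initial segment of } \bold g_{\bold t, n}\} \in \mathcal D_{\bold m}$, as needed.
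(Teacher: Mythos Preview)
Your argument is correct. Items (1) and (3) match the paper's treatment (which simply declares them evident, resp.\ immediate from the rules of the game). For item (2) both you and the paper diagonalize, but the organization differs: the paper first appeals to $\aleph_1$-saturation of the reduced power $(\omega,<)^\omega/\mathcal D_{\bold m}$ to produce a single ``pacing'' function $\eta$ lying $\mathcal D_{\bold m}$-above every constant and $\mathcal D_{\bold m}$-below every $\eta_\ell=\langle \bold k_{\bold m,n}-l_{\bold s_\ell,n}:n<\omega\rangle$, and then defines $\ell_n$ as the largest $\ell\leq\eta(n)$ for which the plays are $\unlhd$-increasing up to $\ell{+}1$ and $\bold k_{\bold m,n}-l_{\bold s_i,n}\geq\eta(n)$ for all $i\leq\ell$; your version instead builds the decreasing chain $B_\ell\in\mathcal D_{\bold m}$ directly and takes $\ell(n)=\max\{\ell:n\in B_\ell\}$. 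Your route is more elementary---it uses only that $\mathcal D_{\bold m}$ is a filter containing all cofinite sets, and sidesteps the saturation claim for a reduced product over a non-ultrafilter---while the paper's intermediate $\eta$ gives a slightly more uniform bookkeeping once it is in hand. The two constructions yield the same upper bound up to the $\leq_{\AP_{\bold m}}$-equivalence induced by $\mathcal D_{\bold m}$.
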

\begin{proof}
 Clause (1) is clear. Clause (2) follows by an easy diagonalization argument, but let us elaborate a proof.
For each $\ell < \omega$ set
\[
\eta_\ell= \langle \bold k_{\bold m, n} - l_{\bold s_\ell, n}: n<\omega        \rangle \in{} {\omega}^{\omega}.
\]
 Then $\lim_{\mathcal D_{\bold m}}\eta_\ell(n)=\infty$, and for all $i<\omega,$
 \[
 (\omega, <)^{\omega} / \mathcal D_{\bold m} \models  \id_i /\mathcal D_{\bold m} < \eta_\ell / \mathcal D_{\bold m},
 \]
 where $\id_i$ is the constant sequence $i$ on $\omega.$
 As the structure
 $(\omega, <)^{\omega} / \mathcal D_{\bold m}$ is $\aleph_1$-saturated, we can find some $\eta \in {}{\omega}^{\omega}$
 such that for all $\ell, i<\omega$,
 \[
  (\omega, <)^{\omega} / \mathcal D_{\bold m} \models  \id_i /\mathcal D_{\bold m} <  \eta/\mathcal D_{\bold m}   <\eta_\ell / \mathcal D_{\bold m}.
 \]
 For $n<\omega$ let $\ell_n$ be the maximal natural number $\ell$ such that:
 \begin{enumerate}
 \item[(a)] $\ell \leq \eta(n),$

 \item[(b)] $\langle g_{\bold s_i, n}: i \leq \ell+1     \rangle$ is $\unlhd$-increasing,

 \item[(c)] $\bold k_{\bold m, n} -  l_{\bold s_i, n} \geq \eta(n)$ for every $i \leq \ell.$
 \end{enumerate}
 Note that each $\ell_n$ is well-defined as on the one hand, $\ell_n \leq \eta(n),$
 and on the other hand, $\ell=0$ satisfies the above requirements.

 Let $\ell_* < \omega.$ Then
 \[
 D_{\ell_*}=\{n<\omega: \ell_n \geq \ell_*,  k_{\bold m, n} -  l_{\bold s_{\ell(n)}, n} \geq \eta(n)  \} \in \mathcal D_{\bold m}.
 \]
 To see this, note that the set
 \[
 \{ n<\omega: \forall j \leq \ell_* \big(\eta(n) > \ell_*, \bold g_{\bold s_j, n} \unlhd \bold g_{\bold s_{j+1}, n} \text{~and~}  k_{\bold m, n} -  l_{\bold s_j, n} \geq \eta(n)      \big) \}
 \]
belongs to $\mathcal D_{\bold m}$ and is included in $D_{\ell_*},$ so $D_{\ell_*} \in \mathcal D_{\bold m}$ as well.

 Now set
 $\bold s= \langle \bold g_{\bold s, n}: n<\omega      \rangle$, where $\bold g_{\bold s, n}=\bold g_{\bold s_{\ell_n}, n}$ for each $n<\omega$.
We show that $\bold s$ is as required.

In order to show that $\bold s \in \AP_{\bold m}$, it only
 suffices to show that
$$\lim_{\mathcal D_{\bold m}} \bold k_{\bold m, n} - l_{\bold s, n}=\infty.$$
This follows easily from the following inequalities
\[
 \bold k_{\bold m, n} - l_{\bold s, n}=  \bold k_{\bold m, n} - l_{\bold s_{\ell_n}, n} \geq \eta_{\ell_n}(n) \geq \eta(n),
\]
and the fact that $\lim_{\mathcal D_{\bold m}} \eta(n)=\infty.$

Now fix $\ell_*<\omega$. Then for all $n$ with $\ell_n > \ell_*$ we have
\[
 \bold g_{\bold s_{\ell_*}, n} \unlhd \bold g_{\bold s_{\ell_n}, n}= \bold g_{\bold s, n},
\]
hence
\[
D_{\ell_*} \subseteq \{ n< \omega: \bold g_{\bold s_\ell, n} \unlhd \bold g_{\bold s, n}  \},
\]
where $D_{\ell_*}$ is as defined above. As $D_{\ell_*} \in \mathcal D_{\bold m}$, we have
$\{ n< \omega: \bold g_{\bold s_\ell, n} \unlhd \bold g_{\bold s, n}  \} \in \mathcal D_{\bold m}$, and hence
 $\bold s_\ell \leq_{\AP} \bold s.$

Finally (3)
follows from the way we defined the game $\Game^n_{k}(\bold m)$, noting that protagonist plays with a winning strategy.
\end{proof}
\begin{definition}
For each $\bold s \in \AP_{\bold m},$ we define $H_{\bold s}$ as the set
\[
H_{\bold s}=\bigg\{ (h_1, h_2): h_1 \in \prod\limits_n \MMB^{\bold m, 1}_n,~   h_2 \in \prod\limits_n \MMB^{\bold m, 2}_n \text{~and~}\{n<\omega: f_{\bold s, n}(h_1(n))=h_2(n)     \} \in \mathcal{D}_{\bold m}              \bigg\}.
\]
\end{definition}
The proof of the next claim follows from the way that we defined the game $\Game^n_k(\bold m)$ \ref{ehfrgame} and the fact that by Lemma \ref{eqv3} we have
$\mathcal D \supseteq \mathcal{D}_{\bold m}$ (see also the proof of lemmas \ref{ss1} and \ref{eqv3}).
\begin{claim}
\label{f200}
Let $\bold s \in \AP_{\bold m}.$ Then
\[
h_{\mathcal{D}, \bold s}=\{(h_1/\mathcal{D}, h_2/\mathcal{D}): (h_1, h_2) \in H_{\bold s}              \}
\]
defines a partial elementary mapping from $\prod\limits_{\mathcal D} \MMB^{\bold m, 1}_n$ into  $\prod\limits_{\mathcal{D}} \MMB^{\bold m, 2}_n$.
\end{claim}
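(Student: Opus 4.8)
The plan is to verify that $h_{\mathcal D, \bold s}$ is (i) well-defined as a function, (ii) injective, and (iii) partial elementary; all three follow by unwinding the definition of $H_{\bold s}$ together with Łoś's theorem and the properties of the game $\Game^n_k(\bold m)$. First I would note that since $\bold s \in \AP_{\bold m}$, each $f_{\bold s, n}$ is the last function of an initial segment of a play of $\Game^n_{\bold k_{\bold m, n}}(\bold m)$ in which protagonist follows a winning strategy; in particular each $f_{\bold s, n}$ is a partial one-to-one function from $\MMB^{\bold m, 1}_n\restriction\tau_{\bold m, \bold k_{\bold m, n}}$ into $\MMB^{\bold m, 2}_n$ preserving strictly atomic formulas and their negations. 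By Lemma \ref{eqv3}(2), the hypothesis $\prod_{\mathcal D}\MMB^{\bold m,1}_n\equiv\prod_{\mathcal D}\MMB^{\bold m,2}_n$ gives $\mathcal D\supseteq\mathcal D_{\bold m}$, so every set in $\mathcal D_{\bold m}$ lies in $\mathcal D$; this is what lets us pass from ``on a $\mathcal D_{\bold m}$-large set'' statements in the definition of $H_{\bold s}$ to ``on a $\mathcal D$-large set'' statements, which is the form needed for Łoś's theorem.

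For well-definedness and injectivity: if $(h_1,h_2),(h_1',h_2')\in H_{\bold s}$ with $h_1/\mathcal D = h_1'/\mathcal D$, then the set $\{n : h_1(n)=h_1'(n)\}\in\mathcal D$, and intersecting with the two $\mathcal D_{\bold m}$-large (hence $\mathcal D$-large) sets witnessing membership in $H_{\bold s}$, we get a $\mathcal D$-large set of $n$ on which $h_2(n)=f_{\bold s,n}(h_1(n))=f_{\bold s,n}(h_1'(n))=h_2'(n)$, so $h_2/\mathcal D=h_2'/\mathcal D$; injectivity is the symmetric argument, using that each $f_{\bold s,n}$ is one-to-one on its domain. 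For surjectivity of $h_{\mathcal D,\bold s}$ onto its range, this is immediate from the definition. That the domain is $\{h_1/\mathcal D : \exists h_2,\ (h_1,h_2)\in H_{\bold s}\}$ and range is $\{h_2/\mathcal D : \exists h_1,\ (h_1,h_2)\in H_{\bold s}\}$ just reads off the definition.

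For partial elementarity: let $\phi(\nu_0,\dots,\nu_{l-1})$ be a $\tau_{\bold m}$-formula, say a $\tau_{\bold m,k}$-formula, and let $(h_1^0,h_2^0),\dots,(h_1^{l-1},h_2^{l-1})\in H_{\bold s}$. We want $\prod_{\mathcal D}\MMB^{\bold m,1}_n\models\phi([h_1^0]_{\mathcal D},\dots)$ iff $\prod_{\mathcal D}\MMB^{\bold m,2}_n\models\phi([h_2^0]_{\mathcal D},\dots)$. The set $C$ of $n$ on which simultaneously $\bold k_{\bold m,n}>k$, $f_{\bold s,n}(h_1^i(n))=h_2^i(n)$ for all $i<l$, and $l_{\bold s,n}<\bold k_{\bold m,n}$ lies in $\mathcal D_{\bold m}$ (using clause (c) of Definition \ref{app}, $\lim_{\mathcal D_{\bold m}}(\bold k_{\bold m,n}-l_{\bold s,n})=\infty$, which forces $l_{\bold s,n}<\bold k_{\bold m,n}$ on a $\mathcal D_{\bold m}$-large set) and hence in $\mathcal D$. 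For $n\in C$, since protagonist still has a legal move available (he is playing a winning strategy and $l_{\bold s,n}<\bold k_{\bold m,n}$), he can extend $f_{\bold s,n}$ to a function $g_n$ whose domain and range contain all the relevant $h_1^i(n)$, resp.\ $h_2^i(n)$, and whose domain is ``big enough'' — but in fact the cleanest route is to apply Lemma \ref{ss1} directly to $f_{\bold s, n}$, extended suitably, to get, on a $\mathcal D_{\bold m}$-large subset of $C$, the bounded-quantifier equivalence
\[
\MMB^{\bold m,1}_n\models\phi_{\dom(g_n)}(h_1^0(n),\dots) \iff \MMB^{\bold m,2}_n\models\phi_{\range(g_n)}(h_2^0(n),\dots);
\]
combining this over all $n$ in that $\mathcal D$-large set with Łoś's theorem applied to $\prod_{\mathcal D}\MMB^{\bold m,1}_n$ and $\prod_{\mathcal D}\MMB^{\bold m,2}_n$ yields the desired equivalence for $\phi$ unbounded, exactly as in the proof of Lemma \ref{eqv3}(1).

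The main obstacle is the quantifier case: given $\prod_{\mathcal D}\MMB^{\bold m,1}_n\models\exists\nu\,\psi(\nu,[\bar h_1]_{\mathcal D})$, one picks a witness $\bar x=\langle x_n\rangle$, but $x_n$ need not lie in $\dom(f_{\bold s,n})$; one must invoke Claim \ref{f201}(3) (or directly the winning strategy of protagonist, using that $l_{\bold s,n}<\bold k_{\bold m,n}$ on a $\mathcal D_{\bold m}$-large set) to extend $\bold s$ to some $\bold t\geq_{\AP_{\bold m}}\bold s$ with $x_n\in\dom(f_{\bold t,n})$, note that $(h_1^i,h_2^i)\in H_{\bold t}$ still holds since $f_{\bold t,n}\supseteq f_{\bold s,n}$ on a $\mathcal D_{\bold m}$-large set, and then run Lemma \ref{ss1} with $f_{\bold t,n}$. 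This is precisely the mechanism already deployed in the proof of Lemma \ref{eqv3}(1), so the argument there transfers essentially verbatim, with $\AP_{\bold m}$ bookkeeping replacing the ad hoc choice of the last move $g_n$, $h_n$ there; keeping track that all the exceptional sets remain in $\mathcal D_{\bold m}\subseteq\mathcal D$ is the only point requiring care.
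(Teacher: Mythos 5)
Your proposal is correct and follows the same route the paper intends: the paper gives no written argument for this claim beyond pointing to the game definition, to $\mathcal D\supseteq\mathcal D_{\bold m}$ (via Lemma \ref{eqv3}(2)), and to the proofs of Lemmas \ref{ss1} and \ref{eqv3}, and your write-up fleshes out exactly those ingredients (well-definedness and injectivity from $f_{\bold s,n}$ being atomic-preserving and one-to-one on a $\mathcal D$-large set, and elementarity by the induction on complexity with the existential step handled by extending $\bold s$ via Claim \ref{f201}(3) so the witness enters the domain). The only cosmetic point is that Lemma \ref{ss1} plus \L{}o\'s does not by itself yield the unrelativized equivalence — the induction at the ultraproduct level is what carries that — but your final paragraph already runs that induction correctly.
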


Let $\mathcal F \subseteq \prod\limits_n [\omega]^{\bold k_{\bold m, n}}$
witness $\mathfrak{d}_{\bold m}=\aleph_1.$

For each $\ell \in \{1, 2\}$  let $\langle  \MMB^{\bold m, \ell}_{n, i}: i<\omega_1            \rangle$
be a $\prec$-increasing and continuous chain of countable elementary submodels of $\MMB^{\bold m, \ell}_{n}$ whose union is $\MMB^{\bold m, \ell}_{n}$.
For each $i<\omega_1$ and $\ell \in \{1, 2\}$, as $||\MMB^{\bold m, \ell}_{n, i}|| \leq \aleph_0$,
there is some
$\mathcal F^\ell_i \subseteq \prod\limits_n [\MMB^{\bold m, \ell}_{n, i}]^{\bold k_{\bold m, n}}$
of cardinality $\aleph_1$ such that
\[
\left(\forall \eta \in \prod\limits_n \MMB^{\bold m, \ell}_{n, i}\right)\left( \exists f  \in \mathcal F^\ell_i\right) \forall_{\mathcal{D}_{\bold m}}n \left(\eta(n) \in f(n)\right).
\]
Let $\langle f^\ell_j: j<\omega_1   \rangle$
enumerate $\bigcup\limits_{i<\omega_1} \mathcal F^\ell_i$. By induction on $i<\omega_1$, and using Claim \ref{f201},
we choose $\bold s_i$ such that:
\begin{enumerate}
\item $\bold s_i \in \AP_{\bold m},$

\item $i< j \implies \bold s_i \leq_{\AP_{\bold m}} \bold s_j$,

\item if $i=2j+1$, then
\[
n< \omega \text{~and~} l_{\bold s_{2j}} < \bold k_{\bold m, n} \implies f^1_j(n) \subseteq \dom(f_{\bold s_i, n}),
\]

\item if $i=2j+2$, then
\[
n< \omega \text{~and~} l_{\bold s_{2j+1}} < \bold k_{\bold m, n} \implies f^2_j(n) \subseteq \range(f_{\bold s_i, n}).
\]
\end{enumerate}
Now let $\mathcal D$ be a non-principal ultrafilter on $\omega$ which extends $\mathcal{D}_{\bold m}$. Let
\[
h=\bigcup\{h_{\mathcal D, \bold s_i}: i<\omega_1    \}.
\]
\begin{claim}
\label{f202}
$h$ is an isomorphism from $\prod\limits_{\mathcal{D}} \MMB^{\bold m, 1}_n$ onto $\prod\limits_{\mathcal{D}} \MMB^{\bold m, 2}_n$.
\end{claim}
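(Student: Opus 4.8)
The plan is to verify that the map $h=\bigcup\{h_{\mathcal D,\bold s_i}:i<\omega_1\}$ is a well-defined bijection which is elementary, and hence an isomorphism. First I would observe that $h$ is a well-defined partial function: since the sequence $\langle \bold s_i:i<\omega_1\rangle$ is $\leq_{\AP_{\bold m}}$-increasing, for $i<j$ the functions $f_{\bold s_i,n}$ and $f_{\bold s_j,n}$ agree on a $\mathcal D_{\bold m}$-large set of coordinates, so by \Cref{f200} the partial elementary maps $h_{\mathcal D,\bold s_i}$ and $h_{\mathcal D,\bold s_j}$ are compatible (any two elements of $H_{\bold s_i}$ and $H_{\bold s_j}$ that disagree would disagree on a $\mathcal D$-large set, contradicting $\mathcal D\supseteq\mathcal D_{\bold m}$ together with the fact that $f_{\bold s_i,n}\subseteq f_{\bold s_j,n}$ on a $\mathcal D_{\bold m}$-large set). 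Hence $h$ is the increasing union of partial elementary maps, and therefore $h$ itself is a partial elementary map from $\prod_{\mathcal D}\MMB^{\bold m,1}_n$ into $\prod_{\mathcal D}\MMB^{\bold m,2}_n$; in particular it is injective and preserves all first-order formulas. It remains only to check that $h$ is total and surjective.

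For totality, fix any $\bar x=\langle x_n:n<\omega\rangle\in\prod_n\MMB^{\bold m,1}_n$; I want to show $\bar x/\mathcal D\in\dom(h)$. Since the chains $\langle\MMB^{\bold m,1}_{n,i}:i<\omega_1\rangle$ are continuous with union $\MMB^{\bold m,1}_n$ and there are only countably many coordinates, using that $\omega_1$ is regular we may find a single $i<\omega_1$ with $x_n\in\MMB^{\bold m,1}_{n,i}$ for all $n$ (here one uses that $\operatorname{cf}(\omega_1)>\omega$). Strictly speaking we need a slightly more careful bookkeeping: consider the function $n\mapsto x_n$ and pick, for each $n$, the least index at which it appears; the supremum over $n<\omega$ of these indices is some $i^*<\omega_1$, and then $\bar x\in\prod_n\MMB^{\bold m,1}_{n,i^*}$. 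By the choice of $\mathcal F^1_{i^*}$ there is $f\in\mathcal F^1_{i^*}$ with $\forall_{\mathcal D_{\bold m}}n\,(x_n\in f(n))$; say $f=f^1_j$ in the enumeration. Then by clause (3) of the construction, for the stage $i=2j+1$ we have $f^1_j(n)\subseteq\dom(f_{\bold s_i,n})$ whenever $l_{\bold s_{2j},n}<\bold k_{\bold m,n}$. By clause (c) of \Cref{app}, $\lim_{\mathcal D_{\bold m}}(\bold k_{\bold m,n}-l_{\bold s_{2j},n})=\infty$, so $l_{\bold s_{2j},n}<\bold k_{\bold m,n}$ on a $\mathcal D_{\bold m}$-large set; intersecting, $x_n\in f^1_j(n)\subseteq\dom(f_{\bold s_i,n})$ on a $\mathcal D_{\bold m}$-large, hence $\mathcal D$-large, set. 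Setting $h_2(n)=f_{\bold s_i,n}(x_n)$ where defined (and anything otherwise) gives $(\bar x,h_2)\in H_{\bold s_i}$, so $\bar x/\mathcal D\in\dom(h_{\mathcal D,\bold s_i})\subseteq\dom(h)$. Surjectivity is entirely symmetric, using the chains $\langle\MMB^{\bold m,2}_{n,i}:i<\omega_1\rangle$, the families $\mathcal F^2_i$, the enumeration $\langle f^2_j:j<\omega_1\rangle$, and clause (4) at stages $i=2j+2$.

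Putting these together: $h$ is a bijection from $\prod_{\mathcal D}\MMB^{\bold m,1}_n$ onto $\prod_{\mathcal D}\MMB^{\bold m,2}_n$ which preserves all first-order formulas (being an increasing union of partial elementary maps, by \Cref{f200} and \Cref{f201}(2)), hence it is an isomorphism, which is exactly \Cref{f202}, and with it \Cref{f207}. The main obstacle, and the one step deserving genuine care, is the totality/surjectivity argument: one must be sure that an arbitrary element $\bar x\in\prod_n\MMB^{\bold m,1}_n$ really lands inside $\prod_n\MMB^{\bold m,1}_{n,i}$ for a single countable ordinal $i$ — this is where the regularity of $\omega_1$ (equivalently $\operatorname{cf}(\omega_1)=\omega_1>\omega$) is used — and that the hypothesis $\mathfrak d_{\bold m}=\aleph_1$ is precisely what lets the $\omega_1$-long induction, with its two interleaved families of $\aleph_1$-many ``covering'' functions $f^\ell_j$, catch every element while still closing off at stage $\omega_1$ via \Cref{f201}(2). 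Everything else is the routine verification that compatibility of the $h_{\mathcal D,\bold s_i}$ follows from $\mathcal D\supseteq\mathcal D_{\bold m}$ and the $\leq_{\AP_{\bold m}}$-increasing structure.
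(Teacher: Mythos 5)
Your proposal is correct and follows essentially the same route as the paper: the paper's own proof of this claim is a two-sentence sketch asserting that the $h_{\mathcal D,\bold s_i}$ form an increasing chain of partial elementary maps (via Claim \ref{f200}) whose union is total and onto "by the construction," and you have simply supplied the details — the compatibility check using $\mathcal D\supseteq\mathcal D_{\bold m}$, the use of $\cf(\omega_1)>\omega$ to locate an arbitrary $\bar x$ inside a single $\prod_n\MMB^{\bold m,1}_{n,i^*}$, and the appeal to clauses (3)/(4) of the induction together with $\lim_{\mathcal D_{\bold m}}(\bold k_{\bold m,n}-l_{\bold s,n})=\infty$ to get $x_n\in\dom(f_{\bold s_{2j+1},n})$ on a $\mathcal D$-large set. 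These are exactly the verifications the paper leaves implicit, and they are carried out correctly.
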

\begin{proof}
By Claim \ref{f200}, each $h_{\mathcal{D}, \bold s_i}$ is a partial elementary embedding from $\prod\limits_{\mathcal{D}} \MMB^{\bold m, 1}_n$ into $\prod\limits_{\mathcal{D}} \MMB^{\bold m, 2}_n$, and furthermore, for $i<j<\omega_1$
we have $h_{\mathcal{D}, \bold s_i} \subseteq h_{\mathcal{D}, \bold s_j}$. By the construction of the sequence
$\langle  \bold s_i: i<\omega_1    \rangle$, we clearly have $\dom(h)=\prod\limits_{\mathcal{D}} \MMB^{\bold m, 1}_n$
and $\range(h)=\prod\limits_{\mathcal{D}} \MMB^{\bold m, 2}_n$.
So we are done.
\end{proof}
Lemma \ref{f207} follows.
\end{proof}
The following is an immediate corollary of Lemma \ref{f207}.
\begin{corollary}
\label{f20} Suppose $\bold m$ is an ultraproduct problem, $\mathfrak{d}_{\bold m}=\aleph_1$ and for each $n<\omega$, $\MMB^{\bold m, 1}_n \equiv \MMB^{\bold m, 2}_n$. If $\mathcal D$ is a non-principal ultrafilter on $\omega$, then $\prod\limits_{\mathcal D} \MMB^{\bold m, 1}_n \cong \prod\limits_{\mathcal D} \MMB^{\bold m, 2}_n$.
\end{corollary}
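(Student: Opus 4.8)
The corollary is billed as immediate from Proposition~\ref{f207}, so the plan is simply to check that, granting the two new hypotheses of the corollary (that $\bold m$ is an ultraproduct problem with $\mathfrak{d}_{\bold m}=\aleph_1$ is already assumed, and now in addition $\MMB^{\bold m,1}_n\equiv\MMB^{\bold m,2}_n$ for every $n<\omega$), all hypotheses of Proposition~\ref{f207} are met. The only one that is not literally given is that the ultraproducts $\prod_{\mathcal D}\MMB^{\bold m,1}_n$ and $\prod_{\mathcal D}\MMB^{\bold m,2}_n$ are elementarily equivalent; by Lemma~\ref{eqv3}(1) this will follow once I show $\mathcal D\supseteq\mathcal D_{\bold m}$.

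The heart of the matter is that pointwise elementary equivalence forces $\bold k_{\bold m,n}=n$ for every $n<\omega$, and hence $\bold k_{\bold m,n}\to\infty$. To see this, fix $n$ and recall that $\Game^n_n(\bold m)=\Game_{\tau_{\bold m,n},n}(\MMB^{\bold m,1}_n,\MMB^{\bold m,2}_n)$ is a finite Ehrenfeucht--Fra\"iss\'e-type game over the finite vocabulary $\tau_{\bold m,n}$: finitely many rounds, in each of which the antagonist names at most $n$ elements on each side, the protagonist answering with an extension of the current finite partial $\tau_{\bold m,n}$-isomorphism. Since $\MMB^{\bold m,1}_n\equiv\MMB^{\bold m,2}_n$ as $\tau_{\bold m}$-structures, their $\tau_{\bold m,n}$-reducts are elementarily equivalent, hence agree on every first-order $\tau_{\bold m,n}$-sentence, in particular on every one of quantifier rank below any prescribed finite bound; by the Ehrenfeucht--Fra\"iss\'e theorem the protagonist therefore has a winning strategy in $\Game^n_n(\bold m)$, so $\bold k_{\bold m,n}=n$. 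Consequently, for each $k<\omega$ the generating set $\{n<\omega:\bold k_{\bold m,n}>k\}=\{n:n>k\}$ is cofinite, so $\mathcal D_{\bold m}=\mathcal D_{\bold k_{\bold m}}$ is exactly the Fr\'echet filter on $\omega$. Since $\mathcal D$ is a non-principal ultrafilter it contains every cofinite subset of $\omega$, whence $\mathcal D\supseteq\mathcal D_{\bold m}$. Now Lemma~\ref{eqv3}(1) yields $\prod_{\mathcal D}\MMB^{\bold m,1}_n\equiv\prod_{\mathcal D}\MMB^{\bold m,2}_n$, and Proposition~\ref{f207} applies to give $\prod_{\mathcal D}\MMB^{\bold m,1}_n\cong\prod_{\mathcal D}\MMB^{\bold m,2}_n$, as required.

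The one point that calls for a little care — and which I regard as the main, if minor, obstacle — is the identification $\bold k_{\bold m,n}=n$: one must check that the specific bookkeeping of the game $\Game^n_n(\bold m)$ does not disturb the classical equivalence between winning strategies in finite Ehrenfeucht--Fra\"iss\'e games and agreement on sentences of bounded quantifier rank. Concretely, the antagonist is allowed to choose finite \emph{sets} rather than singletons and to play on both structures simultaneously, and the protagonist must preserve all strictly atomic formulas, including ones involving function symbols and individual constants; but a round in which a finite set is named can be interpolated between finitely many ordinary one-element rounds, and the function symbols are handled by passing to the finite relational vocabulary of their graphs, so the passage is entirely routine. (Note that one does not even need the exact value $n$ here: any lower bound on $\bold k_{\bold m,n}$ tending to infinity would already force $\mathcal D_{\bold m}$ to be the Fr\'echet filter, which is all that is used.)
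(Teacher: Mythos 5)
Your proof is correct: the only hypothesis of Proposition~\ref{f207} not literally assumed in the corollary is the elementary equivalence of the two ultraproducts, and you do establish it. But you take a noticeably longer road than the one the paper intends by calling this an ``immediate'' corollary. The direct argument is a one-line application of \L o\'s's theorem: for any $\tau_{\bold m}$-sentence $\phi$, the hypothesis $\MMB^{\bold m,1}_n\equiv\MMB^{\bold m,2}_n$ gives $\{n<\omega:\MMB^{\bold m,1}_n\models\phi\}=\{n<\omega:\MMB^{\bold m,2}_n\models\phi\}$, so the two ultraproducts satisfy exactly the same sentences for \emph{every} ultrafilter $\mathcal D$, and Proposition~\ref{f207} applies at once. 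Your route instead passes through Lemma~\ref{eqv3}(1), which obliges you to verify $\mathcal D\supseteq\mathcal D_{\bold m}$ and hence to compute $\bold k_{\bold m,n}=n$ via the Ehrenfeucht--Fra\"iss\'e theorem for the finite vocabularies $\tau_{\bold m,k}$ (together with the routine adaptations for set-moves, two-sided play, and function symbols that you rightly flag). That computation is correct, and it has the mild side benefit of identifying $\mathcal D_{\bold m}$ as exactly the Fr\'echet filter under the pointwise-equivalence hypothesis; but it imports the EF theorem where \L o\'s alone suffices, and Lemma~\ref{eqv3} is really designed for the general situation in which the models $\MMB^{\bold m,\ell}_n$ are only asymptotically equivalent and pointwise elementary equivalence is unavailable. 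In short: correct, self-contained, but heavier machinery than needed.
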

We now turn to the problem of controlling the size of $\mathfrak{d}_{\bold m}$'s.
We again prove a slightly stronger result.
  We first start with the following simple lemma.
  \begin{lemma}
  \label{a5}
Assume $\MA_\kappa$, and let $\bold k=\langle \bold k_n: n<\omega \rangle \in {}^\omega{\omega}$ be such that $\lim_{\mathcal{D}_{\bold k}} \bold k_n=\infty$. Let $A \subseteq {}^\omega{\omega}$ be of size $\leq \kappa$. Then there exists $f \in \prod\limits_{n<\omega}[\omega]^{\leq \bold{k}_n}$ such that for each $\eta \in A$, we have $\forall_{\mathcal{D}_{\bold k}}n \left(\eta(n)\in f(n)\right)$.
  \end{lemma}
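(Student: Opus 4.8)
The plan is to apply Martin's Axiom $\MA_\kappa$ directly to a natural ccc forcing, namely a finite-approximation poset built from the sets $\{n : \bold k_n > k\}$ that generate $\mathcal D_{\bold k}$. First I would fix an enumeration $A = \{\eta_\xi : \xi < \kappa\}$ and recall that, since $\lim_{\mathcal D_{\bold k}} \bold k_n = \infty$, for every $k < \omega$ the set $E_k = \{n : \bold k_n \ge k\}$ lies in $\mathcal D_{\bold k}$, and $\bigcap_k E_k$ is co-bounded-complemented in the appropriate sense so that the generated filter is as in Definition \ref{g32}. The forcing $\PP$ consists of conditions $p = (s_p, F_p)$ where $s_p \in \prod_{n < m_p}[\omega]^{\le \bold k_n}$ for some $m_p < \omega$ (a finite partial approximation to the desired $f$) and $F_p \in [\kappa]^{<\omega}$ (a finite promise set), ordered by $q \le p$ iff $s_q \supseteq s_p$, $F_q \supseteq F_p$, and for all $n$ with $m_p \le n < m_q$ and all $\xi \in F_p$ with $\bold k_n > 0$ we have $\eta_\xi(n) \in s_q(n)$.

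The key steps are then: (i) verify $\PP$ is ccc — in fact it is $\sigma$-centered, since two conditions with the same stem $s_p$ are compatible (take the union of the stems, which is just $s_p$ itself, and the union of the promise sets), and there are only countably many possible stems; (ii) for each $\xi < \kappa$ define $D_\xi = \{p \in \PP : \xi \in F_p\}$, which is dense (given $p$, just add $\xi$ to $F_p$ without changing $s_p$); (iii) for each $m < \omega$ define $E^{\PP}_m = \{p : m_p \ge m\}$, which is dense because from any $p$ we may extend the stem to length $m$, choosing at each new coordinate $n$ a subset of $\omega$ of size $\le \bold k_n$ large enough to contain $\eta_\xi(n)$ for all $\xi \in F_p$ (possible precisely because $|F_p| < \omega$ and... here is the subtle point, see below); (iv) apply $\MA_\kappa$ to get a filter $G$ meeting all $D_\xi$ and all $E^{\PP}_m$, and set $f = \bigcup_{p \in G} s_p$, which is a total function in $\prod_n [\omega]^{\le \bold k_n}$; (v) check that $f$ works: given $\eta_\xi \in A$, pick $p \in G \cap D_\xi$; then for every $n \ge m_p$ with $\bold k_n > 0$, genericity forces $\eta_\xi(n) \in f(n)$, so $\{n : \eta_\xi(n) \in f(n)\} \supseteq \{n \ge m_p : \bold k_n > 0\}$, and this set is in $\mathcal D_{\bold k}$ since it contains $\{n : \bold k_n \ge 1\}$ minus a finite set, giving $\forall_{\mathcal D_{\bold k}} n\,(\eta_\xi(n) \in f(n))$.

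The main obstacle is step (iii), the density of $E^{\PP}_m$: when $|F_p| > \bold k_n$ for some coordinate $n$ with small $\bold k_n$, we cannot fit all the values $\{\eta_\xi(n) : \xi \in F_p\}$ into a set of size $\le \bold k_n$. The resolution is that this only matters on coordinates where $\bold k_n$ is large, and we need not honour the promise on coordinates with $\bold k_n$ too small — but we must be careful that the set of coordinates we \emph{do} honour is still $\mathcal D_{\bold k}$-large. This is exactly why the conclusion is stated with $\forall_{\mathcal D_{\bold k}}$ rather than $\forall^*$: restricting the promise in condition $q \le p$ to coordinates $n$ with, say, $\bold k_n \ge |F_p|$ (or more simply building the stem so that the promise for $\xi$ is honoured from the stage at which $\xi$ entered $F_p$ onward, using that for large $n$, $\bold k_n$ exceeds any fixed finite bound since $\lim_{\mathcal D_{\bold k}} \bold k_n = \infty$) makes the extension always possible, and the set of honoured coordinates for a fixed $\xi$ is co-bounded intersected with $\{n : \bold k_n \ge \text{(fixed bound)}\}$, hence in $\mathcal D_{\bold k}$. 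Once the bookkeeping in the ordering of $\PP$ is set up to reflect this, the density arguments and the ccc (σ-centeredness) check are routine, and the lemma follows.
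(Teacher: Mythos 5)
You have correctly located the crux of the argument --- the pigeonhole obstruction at coordinates $n$ where $\bold k_n$ is smaller than the number of active promises --- but the proof stands or falls on the definition of the partial order that resolves it, and that definition is exactly what is missing. The two concrete resolutions you sketch do not work as stated. If the promise in $q\le p$ is honoured only on coordinates with $\bold k_n\ge|F_p|$, the order is not transitive: for $r\le q\le p$ the extension from $q$ to $r$ only honours promises where $\bold k_n\ge|F_q|$, so a promise made to $\xi\in F_p$ is silently dropped on the coordinates with $|F_p|\le \bold k_n<|F_q|$, and iterating extensions can destroy it on a $\mathcal D_{\bold k}$-large set. If instead each $\xi$'s promise is honoured on all $n$ past the stage at which $\xi$ entered $F_p$, the pigeonhole problem is not resolved (at a later coordinate $n$ with $\bold k_n$ small, all of $F_p$ is still active), and in any case this version would yield $\forall^* n\,(\eta(n)\in f(n))$, which is simply false in general when $\liminf_n\bold k_n<\infty$ (take $\bold k_n=1$ on the evens and two functions disagreeing everywhere). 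The workable bookkeeping, which your last parenthetical gestures at but does not pin down, is to record for each $\xi\in F_p$ a \emph{personal} threshold $j_\xi$, fixed once and for all when $\xi$ enters the promise set, to demand as part of being a condition that $|\{\xi\in F_p: j_\xi\le j\}|\le j$ for every $j$ (so that stem-extension is always possible), and to honour $\xi$'s promise exactly on $\{n\ge m_\xi:\bold k_n\ge j_\xi\}\in\mathcal D_{\bold k}$; one then has to re-verify transitivity, the density of $D_\xi$ (choosing $j_\xi$ large enough to preserve the cardinality constraint), and the ccc (the naive union of promise functions may violate the constraint, so $\sigma$-centeredness needs an extra argument or one falls back on a $\Delta$-system). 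None of this is routine enough to be waved through.

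It is worth noting that the paper's own proof takes a different route that sidesteps this bookkeeping entirely: a condition is a \emph{total} function $f_p\in\prod_n[\omega]^{\le\bold k_n}$ with $\sup_n|f_p(n)|<\infty$, together with a finite "frozen" initial segment, and the order requires $f_q(n)\supseteq f_p(n)$ only for $\mathcal D_{\bold k}$-many $n$. Then the density of $D_\eta=\{p:\forall_{\mathcal D_{\bold k}}n\,(\eta(n)\in f_p(n))\}$ is immediate (insert $\eta(n)$ into $f_p(n)$ on $\{n:\bold k_n>\sup_m|f_p(m)|\}$), and compatibility of two conditions with bounds $\le l$ is witnessed by taking unions on $\{n:\bold k_n>2l\}$. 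The price is paid at the very end, in arguing that the generic object still lies in $\prod_n[\omega]^{\le\bold k_n}$; your finite-stem approach pays the price up front, in the definition of the order. Either route can be completed, but as written your proposal leaves the decisive step unproved.
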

  \begin{proof}
   Let $\MPB$ be the following covering forcing notion. The conditions are pairs $p=(k_p, f_p)$, where
  \begin{enumerate}
  \item[$(\alpha)$] $k_p < \omega,$

  \item[$(\beta)$] $f_p \in \prod\limits_{n<\omega}[\omega]^{\leq \bold{k}_n}$ and $\{|f_p(n)|: n<\omega     \}$
  is bounded.
  \end{enumerate}
  Given conditions $p, q \in \MPB,$ let $p \leq q$ ($q$ is stronger than $p$) if
  \begin{enumerate}
  \item[$(\gamma)$] $k_q \geq k_p,$
  \item[$(\delta)$] $f_q \restriction k_p = f_p \restriction k_p$,
  \item[$(\epsilon)$] $\forall_{\mathcal{D}_{\bold k}}n, f_q(n) \supseteq f_p(n)$.
  \end{enumerate}
  It is easily seen that the forcing notion $\MPB$ is c.c.c. Indeed let
  $A \subseteq \MPB$ be of size $\aleph_1.$
  By shrinking $A$ if necessary, we can assume that $k_p=k_*$ for some fixed $k_* < \omega$
  and all $p \in A.$ Furthermore, we can assume that $f_p(n)=f_q(n)$
  for all $p, q \in A$ and all $n< k_*.$
  We show that any two conditions in $A$ are compatible. Thus let $p, q \in A.$ Let $l<\omega$
  be such that
  \[
  \forall n< \omega \big( |f_p(n)|, |f_q(n)| < l \big).
  \]
  Let
  $E=\{ n< \omega: \bold k_n > 2l   \}$. Then $E \in \mathcal D_{\bold k}$. Define $r \in \MPB$ as $r=(k_r, f_r)$,
  where:
  \begin{enumerate}
\item   $k_r=k_*,$

\item for all $n< k_*, f_r(n)=f_p(n)=f_q(n)$,

\item for all $n \in E \setminus k_*, f_r(n)=f_p(n) \cup f_q(n)$.

\item for all $n \in \omega \setminus (E \cup k_*),$ $f_r(n)=f_p(n)$.
\end{enumerate}
  The condition $r$ is easily seen to be an extension of both of $p$ and $q$.

  For each $\eta \in A$, the set
  \[
  D_\eta = \{p \in \MPB:  \forall_{\mathcal D_{\bold k}} n, \eta(n) \in f_p(n)   \}
  \]
  is clearly dense in $\MPB$. Thus by $\MA_\kappa$ we can find a filter $G$
  such that $G \cap D_\eta \neq \emptyset$ for all $\eta \in A$.
  Then the function $f$, defined as
  $f(n)=\bigcup\limits_{p \in G}f_p(n)$, has the required properties.
  \end{proof}
\begin{proposition}
\label{f12} Suppose the $\text{GCH}$ holds and $\lambda> \cf(\lambda)=\aleph_1$. Then there exists a c.c.c. forcing notion
$\MPB$ of size $\lambda$ such that in $V^{\MPB}$, we have:
\begin{enumerate}
\item $2^{\aleph_0}=\lambda,$
\item if $\bold k \in {}^\omega{\omega}$ is such that $\limsup\limits_{n<\omega} \bold k_n=\infty,$ then $\mathfrak{d}_{\bold k}=\aleph_1$.
\end{enumerate}
\end{proposition}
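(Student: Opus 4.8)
The plan is to take $\mathbb{P}$ to be a finite support iteration $\langle \mathbb{P}_\alpha, \dot{\mathbb{Q}}_\alpha : \alpha < \lambda\rangle$ of length $\lambda$ whose iterands are (names for) the covering forcing $\mathbb{P}_{\bold k}$ from the proof of \cref{a5}, interleaved by a bookkeeping device. Since $\cf(\lambda)=\aleph_1$, I would first fix a partition $\langle T_\gamma : \gamma < \lambda\rangle$ of $\lambda$ into $\lambda$-many sets, each cofinal in $\lambda$ of order type $\omega_1$ (a standard consequence of $\cf(\lambda)=\aleph_1$: take an increasing continuous cofinal sequence $\langle\lambda_i:i<\omega_1\rangle$ of cardinals and distribute the intervals $[\lambda_i,\lambda_{i+1})$ so that each $T_\gamma$ is fed from cofinally many of them). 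During the recursion I maintain a growing global enumeration of all nice $\mathbb{P}_\alpha$-names for elements of ${}^\omega\omega$ that have appeared so far; at stage $\alpha$, with $\gamma$ the unique index with $\alpha\in T_\gamma$, I look at the $\gamma$-th such name $\tau$ (if it has already appeared, and otherwise set $\dot{\mathbb{Q}}_\alpha$ to be Cohen forcing), put $\bold k=\tau^{G_\alpha}$, and let $\dot{\mathbb{Q}}_\alpha=\mathbb{P}_{\bold k}$ (the forcing from the proof of \cref{a5}, built from this $\bold k$ and the filter $\mathcal{D}_{\bold k}$, as computed in $V^{\mathbb{P}_\alpha}$) provided $\limsup_n\bold k_n=\infty$, and $\dot{\mathbb{Q}}_\alpha=$ Cohen forcing otherwise. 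By the proof of \cref{a5} each iterand is c.c.c., hence so is $\mathbb{P}$; keeping $|\mathbb{P}_\alpha|\le\lambda$ for all $\alpha<\lambda$ (an easy induction using the GCH and c.c.c.-ness) gives $|\mathbb{P}|=\lambda$.

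For clause (1): as $\cf(\lambda)=\aleph_1>\aleph_0$, the GCH gives $\lambda^{\aleph_0}=\lambda$, so counting nice names yields $2^{\aleph_0}\le|\mathbb{P}|^{\aleph_0}=\lambda$ in $V^{\mathbb{P}}$; conversely every iterand is non-trivial (the generic slalom added by $\mathbb{P}_{\bold k}$ is new, since—as in the lower bound for $\mathfrak{d}_{\bold k}$ below—no ground-model slalom of these widths catches all ground-model reals modulo $\mathcal{D}_{\bold k}$), so the iteration produces a strictly increasing $\lambda$-chain of sets of reals and $2^{\aleph_0}\ge\lambda$.

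For clause (2), fix $\bold k\in V^{\mathbb{P}}$ with $\limsup_n\bold k_n=\infty$; then (as in the remark after \cref{g32}) $\mathcal{D}_{\bold k}$ is a non-principal proper filter and $\lim_{\mathcal{D}_{\bold k}}\bold k_n=\infty$. That $\mathfrak{d}_{\bold k}\ge\aleph_1$ follows by diagonalization: given slaloms $s_m\in\prod_n[\omega]^{\bold k_n}$ for $m<\omega$, choose $\eta(n)\notin s_0(n)\cup\cdots\cup s_n(n)$ for each $n$; then $\{n:\eta(n)\notin s_m(n)\}$ is cobounded, so $\forall_{\mathcal{D}_{\bold k}}n\,(\eta(n)\in s_m(n))$ fails for every $m$. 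For $\mathfrak{d}_{\bold k}\le\aleph_1$: because $\mathbb{P}$ is c.c.c. and $\cf(\lambda)>\aleph_0$, every real of $V^{\mathbb{P}}$ lies in some $V^{\mathbb{P}_\beta}$ with $\beta<\lambda$; in particular $\bold k=\tau^{G}$ for some nice $\mathbb{P}_{\alpha_0}$-name $\tau$, and $\bold k\in V^{\mathbb{P}_{\alpha_0}}$ with $\limsup_n\bold k_n=\infty$ there (this is absolute). By the bookkeeping there is a cofinal $S\subseteq(\alpha_0,\lambda)$ of size $\aleph_1$ such that for every $\alpha\in S$ we have $\dot{\mathbb{Q}}_\alpha=\mathbb{P}_{\bold k}$ and hence a slalom $s_\alpha\in\prod_n[\omega]^{\le\bold k_n}$ with $\forall_{\mathcal{D}_{\bold k}}n\,(\eta(n)\in s_\alpha(n))$ for every $\eta\in({}^\omega\omega)^{V^{\mathbb{P}_\alpha}}$—this is exactly the genericity of the $\mathbb{P}_{\bold k}$-filter against the dense sets $D_\eta$ from the proof of \cref{a5}, and it is upward absolute by the absoluteness of ``$\forall_{\mathcal{D}_{\bold k}}n$''. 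Setting $\mathcal{F}=\{s_\alpha:\alpha\in S\}$ (after enlarging each $s_\alpha(n)$ to size $\bold k_n$ wherever possible), given any $\eta\in({}^\omega\omega)^{V^{\mathbb{P}}}$ we pick $\beta<\lambda$ with $\eta\in V^{\mathbb{P}_\beta}$ and then $\alpha\in S$ with $\alpha>\beta$; then $\eta\in V^{\mathbb{P}_\alpha}$, so $s_\alpha$ catches $\eta$ modulo $\mathcal{D}_{\bold k}$. Thus $\mathfrak{d}_{\bold k}\le|\mathcal{F}|=\aleph_1$, and so $\mathfrak{d}_{\bold k}=\aleph_1$.

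The one genuinely delicate point is the bookkeeping of the first paragraph: one must arrange that every potential $\bold k$—which may first appear at an arbitrary stage below $\lambda$—is revisited cofinally often within a single iteration of length exactly $\lambda$, and this is precisely where $\cf(\lambda)=\aleph_1$ is used (through the decomposition of $\lambda$ into $\lambda$-many cofinal pieces together with the bound $|\mathbb{P}_\alpha|\le\lambda$ on the number of names available at each stage). Everything else—preservation of c.c.c. along the finite support iteration, the c.c.c.-ness and catching property of $\mathbb{P}_{\bold k}$ coming from \cref{a5}, the absoluteness of $\mathcal{D}_{\bold k}$ and of the quantifier $\forall_{\mathcal{D}_{\bold k}}n$, and the cardinal arithmetic behind $2^{\aleph_0}=\lambda$—is routine.
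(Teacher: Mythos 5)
Your architecture is genuinely different from the paper's, and the difference is where the problem lies. The paper's $\mathbb{P}$ is a finite support iteration of length $\omega_1$ (not $\lambda$) along an increasing sequence $\langle\lambda_i:i<\omega_1\rangle$ of regulars cofinal in $\lambda$, arranged so that $V[G_{\mathbb{P}_i}]\models\MA$ with $2^{\aleph_0}=\lambda_i$; for a given $\bold k$ appearing at stage $i_*$, Lemma \ref{a5} is then applied \emph{inside} $V[G_{\mathbb{P}_{i+1}}]$ to the set $A={}^\omega\omega\cap V[G_{\mathbb{P}_i}]$, which has size $\lambda_i$, strictly below the continuum $\lambda_{i+1}$ of that model, so that $\MA_{\lambda_i}$ is available. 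That is the only way Lemma \ref{a5} is ever used: as a consequence of full Martin's Axiom for a set of reals of size less than the continuum.

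You replace this by the claim that a single, fully $V^{\mathbb{P}_\alpha}$-generic filter $G$ for the covering forcing $\mathbb{P}_{\bold k}$ yields one slalom $s_\alpha\in\prod_n[\omega]^{\le\bold k_n}$ catching \emph{every} $\eta\in({}^\omega\omega)^{V^{\mathbb{P}_\alpha}}$ modulo $\mathcal D_{\bold k}$, and this is a genuine gap: it is not what Lemma \ref{a5} asserts, and the natural candidates for $s_\alpha$ fail. If $s_\alpha(n)=\bigcup\{f_p(n):p\in G\}$, then $s_\alpha(n)$ is infinite whenever $\bold k_n\ge1$: for each $i$, the condition $p_i$ with $k_{p_i}=0$ and $f_{p_i}(n)=\{i\}$ (when $\bold k_n\ge1$, else $\emptyset$) is compatible with every condition, because the order only demands $f_q(n)\supseteq f_p(n)$ on a $\mathcal D_{\bold k}$-large set where the widths leave room; hence $\{q:q\ge p_i\}$ is dense and $p_i\in G$ for all $i$. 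If instead you take the coherent object $s_\alpha(n)=f_p(n)$ for $p\in G$ with $k_p>n$ (which is well defined and does satisfy $|s_\alpha(n)|\le\bold k_n$), then it catches \emph{no} ground-model real: for each $\eta$ and $j$ the set $E_{\eta,j}=\{p:\exists n<k_p\,(n\ge j\wedge\bold k_n>j\wedge\eta(n)\notin f_p(n))\}$ is dense, since given $q$ one may pick $n\ge\max(j,k_q)$ with $\bold k_n>j$ and remove $\eta(n)$ from $f_q(n)$ at that single coordinate while extending $k_q$ past $n$ — a legal extension, as the inclusion clause is only required on a $\mathcal D_{\bold k}$-set. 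So generically $\{n:\eta(n)\in s_\alpha(n)\}$ contains no generator of $\mathcal D_{\bold k}$. The point is that a condition's promise to catch $\eta$ on a $\mathcal D_{\bold k}$-large set is not preserved by extension beyond $k_p$, so genericity against the sets $D_\eta$ does not assemble into one slalom. Your length-$\lambda$ bookkeeping scheme could be repaired by replacing $\mathbb{P}_{\bold k}$ with a localization forcing whose conditions carry a finite set of reals that all further extensions must catch (as in \cite{sh448}), but with $\mathbb{P}_{\bold k}$ as defined the key single-step claim fails.
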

\begin{proof}
Let $\langle  \lambda_i: i < \omega_1      \rangle$ be an increasing  sequence of regular cardinals cofinal in $\lambda$. Let $\MPB=\langle  \langle \MPB_i: i \leq \omega_1 \rangle, \langle  \dot{\MQB}_i: i<\omega_1    \rangle\rangle       $
be a finite support iteration of c.c.c. forcing notions such that:
\begin{itemize}
\item for each $i<\omega_1$, $|\MPB_i| = \lambda_i$,
\item for each $i<\omega_1, V[G_{\MPB_{i}}]\models$``Martin's axiom'',
\item $(2^{\aleph_0})^{V[G_{\MPB_{i}}]} = \lambda_{i}$.
\end{itemize}
It is easily seen, using Lemma \ref{a5}, that $\MPB=\MPB_{\omega_1}$ is as required. Indeed, suppose that $\bold k$ is as in clause (2). Pick $i_* < \omega_1$ such that $\bold k \in V[G_{\MPB_{i_*}}]$. For each $i_* \leq i < \omega_1$, pick, using Lemma \ref{a5}, some $f_i \in \prod\limits_{n<\omega}[\omega]^{\bold{k}_n} \cap V[G_{\MPB_{i+1}}]$ such that:
\[
\left(\forall \eta \in {}^\omega{\omega} \cap V[G_{\MPB_i}]\right) (\forall_{\mathcal{D}_{\bold k}}n) \left(\eta(n)\in f_i(n)\right).
\]
Then the family $\mathcal{F}=\{f_i: i_* \leq i < \omega_1     \}$
witnesses  $\mathfrak d_{\bold k}=\aleph_1$.
\end{proof}
\begin{remark}
By \cite{sh:f}, the conclusion of Lemma \ref{f12} also holds in the Sacks model.
\end{remark}
Putting the above results together we get the following theorem, which extends \cite[Theorem 1.2]{gol-sh}.
\begin{theorem}
\label{d11}
Assume the $\text{GCH}$ holds and $\lambda> \cf(\lambda)=\aleph_1$. Then there exists a c.c.c. forcing notion
$\MPB$ of size $\lambda$ such that in $V^{\MPB}$:
\begin{enumerate}
\item $2^{\aleph_0}=\lambda$,

\item for every ultraproduct problem $\bold m$, if $\mathcal D$ is a non-principal ultrafilter on $\omega$
 and if $\prod\limits_{\mathcal D} \MMB^{\bold m, 1}_n \equiv \prod\limits_{\mathcal D} \MMB^{\bold m, 2}_n$,
then $\prod\limits_{\mathcal D} \MMB^{\bold m, 1}_n \cong \prod\limits_{\mathcal D} \MMB^{\bold m, 2}_n$.
\end{enumerate}
\end{theorem}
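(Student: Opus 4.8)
The plan is to take the very same c.c.c.\ forcing notion $\MPB$ of size $\lambda$ that is produced by Proposition~\ref{f12} and check that it witnesses both clauses of the theorem. Clause (1), namely $2^{\aleph_0}=\lambda$ in $V^{\MPB}$, is literally Proposition~\ref{f12}(1), so nothing new is required there. For clause (2) I would work inside $V^{\MPB}$ and fix an arbitrary ultraproduct problem $\bold m$ together with a non-principal ultrafilter $\mathcal D$ on $\omega$ for which $\prod\limits_{\mathcal D}\MMB^{\bold m,1}_n\equiv\prod\limits_{\mathcal D}\MMB^{\bold m,2}_n$.

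The argument then runs in three short steps. First, since $\bold m$ is an ultraproduct problem, Definition~\ref{a2} gives $\limsup\limits_{n<\omega}\bold k_{\bold m,n}=\infty$, so the sequence $\bold k_{\bold m}=\langle \bold k_{\bold m,n}:n<\omega\rangle\in{}^\omega\omega$ is exactly of the kind to which Proposition~\ref{f12}(2) applies; hence $\mathfrak{d}_{\bold k_{\bold m}}=\aleph_1$ in $V^{\MPB}$. Second, by Definition~\ref{filter} we have $\mathfrak{d}_{\bold m}=\mathfrak{d}_{\bold k_{\bold m}}$, so $\mathfrak{d}_{\bold m}=\aleph_1$. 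Third, all the hypotheses of Proposition~\ref{f207} are now in place for $\bold m$ and $\mathcal D$ --- $\bold m$ is an ultraproduct problem, $\mathfrak{d}_{\bold m}=\aleph_1$, and the two ultraproducts are elementarily equivalent --- so that proposition yields $\prod\limits_{\mathcal D}\MMB^{\bold m,1}_n\cong\prod\limits_{\mathcal D}\MMB^{\bold m,2}_n$, which is exactly what clause (2) asserts.

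The one point that deserves a remark is the absoluteness of $\bold k_{\bold m}$: the value $\bold k_{\bold m,n}$ is determined by the two models $\MMB^{\bold m,\ell}_n$ (which are genuine objects of $V^{\MPB}$) together with the existence of a winning strategy for the protagonist in the \emph{finite} game $\Game^n_k(\bold m)$, and winning strategies in finite games are absolute; hence the sequence $\bold k_{\bold m}$ as computed inside $V^{\MPB}$ is the true one, so it is legitimate to feed it into Proposition~\ref{f12}(2). Granting the earlier results, this is the whole proof, and there is no residual obstacle. The genuinely difficult work is precisely what is already in hand: the preservation of the c.c.c.\ along the finite-support iteration of length $\omega_1$ that forces Martin's axiom cofinally and pushes $2^{\aleph_0}$ up to $\lambda$ (Proposition~\ref{f12}, via Lemma~\ref{a5}), and the transfinite back-and-forth that amalgamates the partial elementary maps $h_{\mathcal D,\bold s}$ along a $\leq_{\AP_{\bold m}}$-increasing $\omega_1$-chain using $\mathfrak{d}_{\bold m}=\aleph_1$ (Proposition~\ref{f207}, via Claim~\ref{f201}). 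Neither of these has to be revisited in order to obtain Theorem~\ref{d11}.
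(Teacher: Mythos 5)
Your proposal is correct and follows essentially the same route as the paper, whose proof of Theorem~\ref{d11} is exactly the combination of Proposition~\ref{f12} (to get the forcing with $2^{\aleph_0}=\lambda$ and $\mathfrak{d}_{\bold k}=\aleph_1$ for every relevant $\bold k$) with Proposition~\ref{f207} (applied via $\mathfrak{d}_{\bold m}=\mathfrak{d}_{\bold k_{\bold m}}$). Your absoluteness remark is harmless but not actually needed, since both $\bold k_{\bold m}$ and the statement of Proposition~\ref{f12}(2) are evaluated entirely inside $V^{\MPB}$.
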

\begin{proof}
By Lemmas \ref{eqv3}, \ref{f20} and \ref{f12}.
\end{proof}
The models appearing in a pseudo ultraproduct problem that we have considered so far
were of size $\aleph_0$ or $\aleph_1$. We now consider the same situation where the models can be finite as well.
\begin{definition}
\label{fin1}
(see \cite{sh448} and \cite[Chapter V]{sh:f})
Suppose $f, g \in{}^{\omega}(\omega+1 \setminus \{0, 1\})$ and $g \leq f.$ Then
\begin{center}
$\mathfrak d_{f, g}=\min\bigg\{|\mathcal F|: \mathcal F \subseteq \prod\limits_{n<\omega}[f(n)]^{< 1+g(n)} \text{~and~} (\forall \eta \in \prod\limits_{n<\omega} f(n))( \exists a \in \mathcal F) (\forall n, \eta(n) \in a(n))     \bigg\}.$
\end{center}
\end{definition}
Given $f \in{}^{\omega}(\omega+1 \setminus \{0, 1\})$, we define the notion of a (pseudo) $f$-ultraproduct problem as follows.
\begin{definition}
\label{fin2}
Suppose $f \in{}^{\omega}(\omega+1 \setminus \{0, 1\})$. Then
a (pseudo) $f$-ultraproduct problem
$
\bold{m}=\langle (\MMB^{\bold m, 1}_n, \MMB^{\bold m, 2}_n,  \tau_{\bold m, n}): n < \omega              \rangle$
is defined as in the notion of a (pseudo) ultraproduct problem (see definitions \ref{upp} and \ref{a2}), but we require  for each
$n<\omega$ and $\ell=1,2$, $||\MMB^{\bold m, \ell}_n || \leq f(n).$
\end{definition}
Given an $f$-ultraproduct problem $\bold m,$ the sequence $\bold k_{\bold m}$ and the filter $\mathcal D_{\bold m}$
 are defined as before. The proof of the next lemma is essentially the same as in the proof of Lemma \ref{f207}.
\begin{lemma}
\label{fin3}
Suppose $f \in{}^{\omega}(\omega+1 \setminus \{0, 1\})$,  $\bold m$ is an $f$-ultraproduct problem, $f \geq \bold k_{\bold m}$,  $\mathfrak{d}_{f, \bold k_{\bold m}}=\aleph_1$ and $\mathcal D \supseteq \mathcal D_{\bold m}$ is a non-principal ultrafilter on $\omega$.
Then  $\prod\limits_{\mathcal D} \MMB^{\bold m, 1}_n \cong \prod\limits_{\mathcal D} \MMB^{\bold m, 2}_n$.
\end{lemma}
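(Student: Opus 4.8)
The plan is to run the proof of Lemma~\ref{f207} essentially verbatim, the one genuine change being that a single family witnessing $\mathfrak d_{f,\bold k_{\bold m}}=\aleph_1$ now plays the role that the $\aleph_1$-many families $\mathcal F^\ell_i$ played there; this is possible because, since $f(n)\le\omega$, every $\MMB^{\bold m,\ell}_n$ is at most countable, so no auxiliary decomposition into countable elementary submodels is needed. First I would, for each $n$ and $\ell\in\{1,2\}$, identify the universe of $\MMB^{\bold m,\ell}_n$ with the ordinal $||\MMB^{\bold m,\ell}_n||\le f(n)$, so that $\prod_n\MMB^{\bold m,\ell}_n\subseteq\prod_n f(n)$. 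I would also note that the hypothesis $\mathcal D\supseteq\mathcal D_{\bold m}$ is precisely what was extracted from elementary equivalence via Lemma~\ref{eqv3} in the proof of Lemma~\ref{f207}, and, conversely, that the obvious analogue of Lemma~\ref{eqv3}(1) for $f$-ultraproduct problems (same proof, via Lemma~\ref{ss1}) already yields $\prod_{\mathcal D}\MMB^{\bold m,1}_n\equiv\prod_{\mathcal D}\MMB^{\bold m,2}_n$, so the statement is non-vacuous and the analogues of Claims~\ref{f200} and~\ref{f201} are available.

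Next I would transcribe Definition~\ref{app} (the poset $\AP_{\bold m}$ with $\le_{\AP_{\bold m}}$), the set $H_{\bold s}$ and the maps $h_{\mathcal D,\bold s}$, and Claims~\ref{f201} and~\ref{f200} without change, reading every occurrence of $[\MMB^{\bold m,\ell}_n]^{\bold k_{\bold m,n}}$ as $[\MMB^{\bold m,\ell}_n]^{\le\bold k_{\bold m,n}}$. This relaxation is forced on us, since a model of size $<\bold k_{\bold m,n}$ has no $\bold k_{\bold m,n}$-element subset, and it is harmless: in a single round the antagonist may introduce up to $\bold k_{\bold m,n}$ new points, and $\lim_{\mathcal D_{\bold m}}(\bold k_{\bold m,n}-l_{\bold s,n})=\infty$ supplies a spare round for $\mathcal D_{\bold m}$-almost every $n$, so the capture step of Claim~\ref{f201}(3) still goes through; the $\aleph_1$-saturation of $(\omega,<)^\omega/\mathcal D_{\bold m}$ used for the limit step in Claim~\ref{f201}(2) depends only on $\mathcal D_{\bold m}$ being a countably generated non-principal filter and is untouched. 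Then I would fix $\mathcal F\subseteq\prod_n[f(n)]^{<1+\bold k_{\bold m,n}}$ of cardinality $\aleph_1$ witnessing $\mathfrak d_{f,\bold k_{\bold m}}=\aleph_1$, and for $\ell=1,2$ set $\mathcal F^\ell=\{\langle a(n)\cap||\MMB^{\bold m,\ell}_n||:n<\omega\rangle:a\in\mathcal F\}$. Since $\prod_n\MMB^{\bold m,\ell}_n\subseteq\prod_n f(n)$, the family $\mathcal F^\ell\subseteq\prod_n[\MMB^{\bold m,\ell}_n]^{\le\bold k_{\bold m,n}}$ has size $\le\aleph_1$ and captures every $\eta\in\prod_n\MMB^{\bold m,\ell}_n$ pointwise (hence, in particular, $\mathcal D_{\bold m}$-often); enumerate it as $\langle f^\ell_j:j<\omega_1\rangle$.

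From here the construction copies that of Lemma~\ref{f207}: using Claim~\ref{f201} I would build a $\le_{\AP_{\bold m}}$-increasing $\langle\bold s_i:i<\omega_1\rangle$ (with upper bounds at limit stages) such that $f^1_j(n)\subseteq\dom(f_{\bold s_{2j+1},n})$ whenever $l_{\bold s_{2j},n}<\bold k_{\bold m,n}$ and $f^2_j(n)\subseteq\range(f_{\bold s_{2j+2},n})$ whenever $l_{\bold s_{2j+1},n}<\bold k_{\bold m,n}$, and then put $h=\bigcup_{i<\omega_1}h_{\mathcal D,\bold s_i}$. By Claim~\ref{f200} each $h_{\mathcal D,\bold s_i}$ is a partial elementary map from $\prod_{\mathcal D}\MMB^{\bold m,1}_n$ into $\prod_{\mathcal D}\MMB^{\bold m,2}_n$; for $i<j$ they satisfy $h_{\mathcal D,\bold s_i}\subseteq h_{\mathcal D,\bold s_j}$ because $\bold g_{\bold s_i,n}$ is an initial segment of $\bold g_{\bold s_j,n}$ for $\mathcal D_{\bold m}$-almost all $n$; and the capturing property of $\mathcal F^1,\mathcal F^2$ together with $\mathcal D\supseteq\mathcal D_{\bold m}$ forces $\dom(h)=\prod_{\mathcal D}\MMB^{\bold m,1}_n$ and $\range(h)=\prod_{\mathcal D}\MMB^{\bold m,2}_n$, so $h$ is the required isomorphism. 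The only point that needs a moment's care — and the sole genuine departure from Lemma~\ref{f207} — is checking that the single pruned family $\mathcal F^\ell$ really does the job of the two-parameter families $\mathcal F^\ell_i$, i.e.\ that passing from $a\in\mathcal F$ to $\langle a(n)\cap||\MMB^{\bold m,\ell}_n||:n<\omega\rangle$ preserves both the width bound $\le\bold k_{\bold m,n}$ and the covering of $\prod_n\MMB^{\bold m,\ell}_n$; this is exactly where $f\ge\bold k_{\bold m}$ (so that $\mathfrak d_{f,\bold k_{\bold m}}$ is the invariant of Definition~\ref{fin1}) and $||\MMB^{\bold m,\ell}_n||\le f(n)$ (so that $\prod_n\MMB^{\bold m,\ell}_n\subseteq\prod_n f(n)$) are used, and it is routine given the identifications fixed at the outset.
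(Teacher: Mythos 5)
Your proposal is correct and follows exactly the route the paper intends: the paper gives no written proof of Lemma~\ref{fin3} beyond the remark that it is ``essentially the same as the proof of Lemma~\ref{f207}'', and your adaptation --- replacing the $\aleph_1$-many families $\mathcal F^\ell_i$ attached to countable elementary submodels by a single witness for $\mathfrak d_{f,\bold k_{\bold m}}=\aleph_1$ restricted to the (at most countable) universes, and taking the hypothesis $\mathcal D\supseteq\mathcal D_{\bold m}$ directly instead of deriving it from elementary equivalence via Lemma~\ref{eqv3}(2) --- is precisely the intended modification. The auxiliary observations (reading $[\MMB^{\bold m,\ell}_n]^{\bold k_{\bold m,n}}$ as $[\MMB^{\bold m,\ell}_n]^{\le\bold k_{\bold m,n}}$, and that the $\mathfrak d_{f,g}$-witness covers at every $n$, hence $\mathcal D_{\bold m}$-often) are sound and complete the argument.
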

The following is analogous to Theorem
\ref{d11} for $f$-ultraproduct problems.
\begin{theorem}
\label{fin4}
Assume the $\text{GCH}$ holds. Then there exists a cardinal and cofinality preserving forcing notion
$\MPB$ such that in $V^{\MPB}$:
\begin{enumerate}
\item $2^{\aleph_0} \geq \aleph_2$,

\item for every $f \in{}^{\omega}(\omega+1 \setminus \{0, 1\})$ and every $f$-ultraproduct problem $\bold m$ with $\bold k_{\bold m} \leq f$, if $\mathcal D$ is a non-principal ultrafilter on $\omega$
and if $\prod\limits_{\mathcal D} \MMB^{\bold m, 1}_n \equiv \prod\limits_{\mathcal D} \MMB^{\bold m, 2}_n$,
then $\prod\limits_{\mathcal D} \MMB^{\bold m, 1}_n \cong \prod\limits_{\mathcal D} \MMB^{\bold m, 2}_n$.
\end{enumerate}
\end{theorem}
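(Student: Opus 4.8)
The plan is to follow the route of Theorem~\ref{d11}: reduce the statement to a question about a covering cardinal, then realize that cardinal by forcing. By Lemma~\ref{fin3}, together with the version of Lemma~\ref{eqv3} for $f$-ultraproduct problems (which goes through unchanged, since $\bold k_{\bold m}$ and $\mathcal D_{\bold m}$ are defined exactly as before and the Ehrenfeucht--Fra\"{i}ss\'{e} argument is indifferent to whether the models $\MMB^{\bold m,\ell}_n$ are finite), it suffices to produce, over a model of the GCH, a forcing notion $\MPB$ which preserves all cardinals and cofinalities and forces $2^{\aleph_0}\ge\aleph_2$ together with the statement $(\ast)$: for every $f\in{}^{\omega}(\omega+1\setminus\{0,1\})$ and every $g\in{}^{\omega}\omega$ with $g\le f$ and $\limsup_{n}g_n=\infty$, one has $\mathfrak d_{f,g}=\aleph_1$. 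Indeed, $(\ast)$ is a statement about $V^{\MPB}$ alone, and if $\bold m$ is an $f$-ultraproduct problem there with $\bold k_{\bold m}\le f$ whose two ultraproducts over a non-principal ultrafilter $\mathcal D$ are elementarily equivalent, then Lemma~\ref{eqv3} gives $\mathcal D\supseteq\mathcal D_{\bold m}$, Definitions~\ref{a2} and~\ref{fin2} give $\limsup_n\bold k_{\bold m,n}=\infty$, so $(\ast)$ yields $\mathfrak d_{f,\bold k_{\bold m}}=\aleph_1$, and Lemma~\ref{fin3} delivers the isomorphism.

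The new feature, compared with Theorem~\ref{d11}, is that a c.c.c.\ forcing no longer suffices for $(\ast)$: once $f$ takes the value $\omega$ on a set $S$ on which $g$ stays small, covering $\prod_{n\in S}\omega$ by slaloms of bounded width forces $\mathfrak d_{f,g}$ to be at least the covering numbers of the meager and null ideals, and a finite-support c.c.c.\ iteration in the style of Proposition~\ref{f12} would add Cohen reals and push these above $\aleph_1$ (correspondingly, the covering forcing that would add one slalom $a\in\prod_n[f(n)]^{<1+g(n)}$ capturing a prescribed set of reals is then not c.c.c., having an uncountable antichain obtained by varying $a$ at a single coordinate $n$ with $f(n)=\omega$, $g(n)$ small). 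One therefore uses an $\omega^{\omega}$-bounding forcing: following \cite[Chapter~V]{sh:f} and \cite{sh448} (see also the remark after Proposition~\ref{f12} and the references in Definition~\ref{fin1}), over a model of GCH let $\MPB$ be the countable-support iteration of length $\omega_2$ of the appropriate tree forcing with norms on possibilities --- a variant of Sacks forcing --- with a bookkeeping running over all admissible pairs $(f,g)$. Each iterand is proper, $\omega^{\omega}$-bounding and has the Sacks (localization) property, so by the preservation theorems for countable-support iterations $\MPB=\MPB_{\omega_2}$ is proper, $\omega^{\omega}$-bounding and preserves all cardinals and cofinalities; GCH in the ground model and length $\omega_2$ give $2^{\aleph_0}=\aleph_2$; and the fusion/norm arguments there deliver $(\ast)$. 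With $(\ast)$ in hand, Lemma~\ref{fin3} and the $f$-version of Lemma~\ref{eqv3} finish the proof as above.

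The main obstacle is establishing $(\ast)$, i.e.\ $\mathfrak d_{f,g}\le\aleph_1$ for all admissible pairs simultaneously. Such a pair appears at some stage $\alpha_0<\omega_2$, where still $2^{\aleph_0}=\aleph_1$; one must then show that every $\eta\in\prod_n f(n)$ of the final model is captured --- on a set in the filter $\mathcal D_{g}$ --- by one of an $\aleph_1$-sized, rather than $\aleph_2$-sized, family of slaloms of width $g$ assembled from $V^{\MPB_{\alpha_0}}$ together with the generically added ones. This is where the Sacks property of the tail iteration over $V^{\MPB_{\alpha_0}}$, the $\aleph_1$-properness and continuous reading of names of the iterands, and the norm bookkeeping of \cite[Chapter~V]{sh:f} are essential; the hypothesis $\limsup_n g_n=\infty$ enters by confining the coordinates on which $g$ is small to a $\mathcal D_{g}$-null set, so that on the relevant $\mathcal D_{g}$-large set the width tends to infinity and the localization/Sacks property applies --- exactly as the same hypothesis, which guarantees that $\mathcal D_{\bold m}$ is a proper filter, is used in the proof of Lemma~\ref{f207}.
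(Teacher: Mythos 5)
Your proposal is correct and follows essentially the same route as the paper: reduce the theorem via Lemma~\ref{fin3} and the $f$-version of Lemma~\ref{eqv3} to forcing $\mathfrak d_{f,\bold k_{\bold m}}=\aleph_1$ for all admissible pairs while $2^{\aleph_0}\ge\aleph_2$, and then invoke the cardinal- and cofinality-preserving forcing of \cite{sh448}. The paper simply cites \cite{sh448} for this step, whereas you additionally (and correctly) explain why a c.c.c.\ iteration cannot work here and which preservation properties of the countable-support tree-forcing iteration are needed.
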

\begin{proof}
By \cite{sh448}, there exists a cardinal and cofinality preserving forcing notion
$\MPB$ which forces the failure of the continuum and such that in $V^{\MPB}$, for each
$f \in{}^{\omega}(\omega+1 \setminus \{0, 1\})$ and each $f$-ultraproduct problem $\bold m$ with $\bold k_{\bold m} \leq f$,
we have $\mathfrak d_{f, \bold k_{\bold m}}=\aleph_1$. Now the result follows from Lemmas \ref{eqv3} and \ref{fin3}.
\end{proof}

\section{An impossibility result}
\label{keisler2}
In this section, we show that we cannot extend Theorem \ref{d11} to get a model of $\text{MA}+2^{\aleph_0} > \aleph_1$
in which for every ultraproduct problem $\bold m$ and every non-principal ultrafilter $\mathcal D$  on $\omega$
  if $\prod\limits_{\mathcal D} \MMB^{\bold m, 1}_n \equiv \prod\limits_{\mathcal D} \MMB^{\bold m, 2}_n$,
then $\prod\limits_{\mathcal D} \MMB^{\bold m, 1}_n \cong \prod\limits_{\mathcal D} \MMB^{\bold m, 2}_n$.
Indeed we prove the following stronger result. Recall that the bounding number is defined as
$$\mathfrak{b}=\min\{|\mathcal{F}|: \mathcal{F} \subseteq {}^\omega{\omega}\text{~and~} \forall f \in {}^\omega{\omega} \exists g \in \mathcal{F}, g \nleq^* f     \},$$
 where $\leq^*$ is the eventual domination order.
\begin{theorem}
\label{ss2} Assume $\mathfrak{b} > \aleph_1$. Then there exists an  ultraproduct problem $\bold m$  such that:
\begin{enumerate}
\item $\MMB^{\bold m, 1}_n \equiv \MMB^{\bold m, 2}_n$ for every $n<\omega,$

\item For every non-principal ultrafilter $\mathcal D$  on $\omega$, $\prod\limits_{\mathcal D} \MMB^{\bold m, 1}_n \ncong \prod\limits_{\mathcal D} \MMB^{\bold m, 2}_n$.
\end{enumerate}
\end{theorem}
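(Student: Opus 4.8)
\emph{Step 1: reduction to ordinary ultrapowers.} The plan is first to note that it suffices to find a single pair of $\tau$-structures $\MMB^{1}\equiv\MMB^{2}$ of size $\le\aleph_{1}$, in a finite or countable vocabulary $\tau$, such that $(\MMB^{1})^{\omega}/\mathcal D\ncong(\MMB^{2})^{\omega}/\mathcal D$ for every non-principal ultrafilter $\mathcal D$ on $\omega$. Indeed, given such a pair, fix any $\subseteq$-increasing sequence $\langle\tau_{n}:n<\omega\rangle$ of finite vocabularies with $\tau_{0}=\emptyset$ and $\bigcup_{n}\tau_{n}=\tau$, and let $\bold m$ be the pseudo ultraproduct problem with $\MMB^{\bold m,\ell}_{n}=\MMB^{\ell}$ and $\tau_{\bold m,n}=\tau_{n}$ for all $n$; here $\kappa(\bold m)=||\MMB^{1}||\vee||\MMB^{2}||\le\aleph_{1}$. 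Since $\MMB^{1}\equiv\MMB^{2}$ they agree on all first-order $\tau$-sentences, in particular on all those of any fixed finite quantifier rank, and then a routine argument (of the kind behind Lemma \ref{ss1}) produces a winning strategy for the protagonist in $\Game^{n}_{k}(\bold m)$ for every $k\le n$. Hence $\bold k_{\bold m,n}=n$ and $\limsup_{n}\bold k_{\bold m,n}=\infty$, so $\bold m$ is an ultraproduct problem satisfying clause (1); and since $\prod_{\mathcal D}\MMB^{\bold m,\ell}_{n}=(\MMB^{\ell})^{\omega}/\mathcal D$, clause (2) is precisely the non-isomorphism of these ultrapowers.

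\emph{Step 2: the pair $\MMB^{1},\MMB^{2}$.} For these I would adapt the construction of \cite{sh326} — which produced, in a model of $2^{\aleph_{0}}=\aleph_{2}$, elementarily equivalent countable graphs with no isomorphic ultrapowers — so that it is carried out in ZFC directly from the hypothesis $\mathfrak b>\aleph_{1}$, at the price of allowing $||\MMB^{\ell}||=\aleph_{1}$ rather than $\aleph_{0}$; this is also in the spirit of the size-$\aleph_{2}$ linear order $N$ built in \cite{gol-sh}. Fix a complete theory $T$ whose vocabulary can name a linearly ordered configuration together with a $\subseteq$-decreasing $\omega$-tower of unary predicates (equivalently, a graph coding such data), and take $\MMB^{1},\MMB^{2}\models T$ that are elementarily equivalent — the difference between them lying only in the behaviour ``at the limit of the tower'', which is invisible to first-order logic — but which carry a cardinal-valued invariant of the Shelah type (roughly: whether a distinguished type over a small parameter set is realized cofinally in the ultrapower), an invariant that any isomorphism of the ultrapowers must preserve. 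One arranges $\MMB^{2}$ so that this invariant equals $\aleph_{1}$, witnessed by an explicitly named increasing $\omega_{1}$-chain that stays cofinal in a set cut out by a single first-order formula; while in $\MMB^{1}$ the corresponding configuration is only approached through the $\omega$-tower of predicates, so that realizing it inside $(\MMB^{1})^{\omega}/\mathcal D$ forces one, on $\mathcal D$-many coordinates, to choose an increasing (diagonalizing) sequence of levels of the tower. Thus $\aleph_{1}$ approximations would yield an $\aleph_{1}$-sized subfamily of ${}^{\omega}\omega$; by $\mathfrak b>\aleph_{1}$ such a family is $\le^{*}$-bounded, hence insufficient, so the invariant on the $\MMB^{1}$-side is $>\aleph_{1}$.

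\emph{Step 3: conclusion and the main obstacle.} Granting Step 2, an isomorphism $h\colon(\MMB^{1})^{\omega}/\mathcal D\to(\MMB^{2})^{\omega}/\mathcal D$ for some non-principal $\mathcal D$ on $\omega$ would respect every symbol of $\tau$, hence the tower of predicates and the distinguished configuration, hence the invariant; comparing the two computed values ($>\aleph_{1}$ versus $\aleph_{1}$) yields a contradiction, while clause (1) is immediate from $\MMB^{1}\equiv\MMB^{2}$. The genuine difficulty, and the step I would expect to cost the most work, is the invariant computation performed \emph{uniformly in $\mathcal D$}: one must verify both that no $\aleph_{1}$-sized family inside $(\MMB^{1})^{\omega}/\mathcal D$ can realize the distinguished configuration cofinally — this is exactly where $\mathfrak b>\aleph_{1}$ is used, and it is delicate because the ultrapower introduces many new ``diagonal'' elements that have to be dominated simultaneously — and, dually, that the $\aleph_{1}$-sized witness on the $\MMB^{2}$-side is not blown up when passing to the ultrapower, which is what dictates the precise form of the coding in Step 2 (in particular the insistence that the relevant $\MMB^{2}$-set be definable by a single formula whereas its $\MMB^{1}$-counterpart is only a tower-limit).
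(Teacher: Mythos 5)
Your Step 1 reduction is fine and matches what the paper does implicitly: the paper also takes a \emph{constant} sequence, setting $\MMB^{\bold m,\ell}_n=\MMB^\ell$ for all $n$, and clause (1) plus the ultraproduct-problem condition then follow from $\MMB^1\equiv\MMB^2$ by a standard Ehrenfeucht--Fra\"{i}ss\'{e} argument over the finite vocabularies $\tau_{\bold m,k}$. You have also correctly located where $\mathfrak b>\aleph_1$ must enter: an $\aleph_1$-sized family of approximations gets translated into an $\aleph_1$-sized subfamily of ${}^\omega\omega$, which is $\le^*$-bounded, hence insufficient.

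The genuine gap is Step 2: you never actually produce the pair $\MMB^1,\MMB^2$, nor carry out the invariant computation, and you say yourself that this is ``the step I would expect to cost the most work.'' A plan to ``adapt the construction of \cite{sh326}'' with towers of predicates and graph codings is a declaration of intent, not a proof, and it is precisely where the content of the theorem lives. Moreover, the machinery you envision is far heavier than necessary. The paper takes $\MMB^1=(\QQ,<)$ and $\MMB^2=(N,<_N)$ a dense linear order of size $\aleph_1$ with a distinguished point $a$ such that the lower cut $N_a=\{d\in N: d<_N a\}$ has cofinality $\aleph_1$; elementary equivalence is immediate (both are dense linear orders without endpoints). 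The invariant is simply the cofinality of a lower cut: passing to ultrapowers, $\cf((N_*)_{a_*})=\aleph_1$ survives (by \cite[Claim 2.2]{gol-sh}), all lower-cut cofinalities in $M_*=\QQ^\omega/\mathcal D$ coincide (by \cite[Claim 2.4]{gol-sh}), so an isomorphism would force $\cf((M_*)_{0_\dagger})=\aleph_1$. The one new computation is that $\cf((M_*)_{0_\dagger})\ge\mathfrak b$: given $\kappa<\mathfrak b$ classes $[f_i]_{\mathcal D}<0_\dagger$ with $-1<f_i(n)<0$, set $g_i(n)=\min\{k:f_i(n)<-\tfrac1k\}$, take $g$ with $g_i\le^* g$ for all $i$, and check that $f(n)=-\tfrac1{g(n)}$ gives an upper bound $[f]_{\mathcal D}<0_\dagger$ for the whole family. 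This is exactly the mechanism you gestured at, but it needs the concrete coding via reciprocals (and the homogeneity claim transferring from $a_\dagger$ to $0_\dagger$) to close the argument; as written, your proposal leaves both the construction and the verification unexecuted.
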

\begin{proof}
We follow \cite{gol-sh}. For every $n<\omega$, let $\MMB^{\bold m, 1}_n=(\MQB, <)$ be the dense linear order of rational numbers and
$\MMB^{\bold m, 2}_n=(N, <_N)$ be a dense linear order of size $\aleph_1$ which has a point $a$ with $\cf(N_a, <_N)=\aleph_1,$
where $N_a=\{ d \in N: d <_N a    \}$.

Suppose by the way of contradiction that $\prod\limits_{\mathcal D} \MMB^{\bold m, 1}_n \cong \prod\limits_{\mathcal D} \MMB^{\bold m, 2}_n$,
for some non-principal ultrafilter $\mathcal D$  on $\omega$ and let
$\pi: \prod\limits_{\mathcal D} \MMB^{\bold m, 1}_n \cong \prod\limits_{\mathcal D} \MMB^{\bold m, 2}_n$ witness such an isomorphism.

For notational simplicity set
$M_*=\prod\limits_{\mathcal D} \MMB^{\bold m, 1}_n = (\MQB, <)^\omega/\mathcal D$
and $N_*=\prod\limits_{\mathcal D} \MMB^{\bold m, 2}_n= N^\omega/ \mathcal D$. Let
$a_*=[\langle a: n<\omega     \rangle]_{\mathcal D} \in N_*.$ By \cite[Claim 2.2]{gol-sh}, $\cf((N_*)_{a_*})=\aleph_1$,
and hence  $\cf((M_*)_{a_\dagger})=\aleph_1$ where $a_\dagger \in M_*$ is such that $\pi(a_\dagger)=a_*.$
By \cite[Claim 2.4]{gol-sh}, $\cf((M_*)_{b_\dagger})=\aleph_1$ for every $b_\dagger \in M_*,$
in particular $\cf((M_*)_{0_\dagger})=\aleph_1$ where $0_\dagger= [\langle 0: n<\omega     \rangle]_{\mathcal D} \in M_*.$
\begin{claim}
$\cf((M_*)_{0_\dagger}) \geq \mathfrak{b}$.
\end{claim}
\begin{proof}
Suppose $\kappa < \mathfrak{b}$ and let $\langle [f_i]_{\mathcal{D}}: i<\kappa           \rangle$ be an increasing sequence  in $M_*$
where for each $i<\kappa, [f_i]_{\mathcal{D}} <_{M_*} 0_\dagger.$ We may assume that $-1< f_i(n) < 0$ for every $n<\omega.$ For each $i<\kappa$
set
\[
g_i(n)=\min\{k<\omega:  f_i(n) < -\frac{1}{k}    \}
\]
Then $\mathcal{G}=\{g_i: i<\kappa\} \subseteq{}^{\omega}\omega$ and $|\mathcal{G}| \leq \kappa$. Thus we can find some $g: \omega \to \omega$
such that for all $i<\kappa, g_i \leq^* g.$ Define $f:\omega \to \MQB$ by
\[
f(n)=-\frac{1}{g(n)}.
\]
For any $i<\kappa$ we can find some $\ell_i<\omega$ such that $g(n) > g_i(n)$ for all $n > \ell_i$ and hence for any such $n$,
\[
f(n)= -\frac{1}{g(n)} > -\frac{1}{g_i(n)} > f_i(n).
\]
It follows that the sequence  $\langle [f_i]_{\mathcal{D}}: i<\kappa           \rangle$ is bounded from above by
$[f]_{\mathcal D} < 0_\dagger$. Thus $\cf((M_*)_{0_\dagger}) > \kappa$. As $\kappa < \mathfrak{b}$ was arbitrary,
$\cf((M_*)_{0_\dagger}) \geq \mathfrak{b}$ and we are done.
\end{proof}
But this implies that $\cf((M_*)_{0_\dagger}) \geq \mathfrak{b} > \aleph_1$ which contradicts $\cf((M_*)_{0_\dagger})=\aleph_1$.
The theorem follows.
\end{proof}
\begin{remark}
Martin's axiom implies $\mathfrak{b}=2^{\aleph_0}$, and hence the conclusion of Theorem \ref{ss2} holds under $\text{MA}+2^{\aleph_0}> \aleph_1$.
\end{remark}

\end{document}